\setlist[enumerate,1]{ref=(\arabic*)}
\theoremstyle{plain}
\newtheorem{Theorem}{Theorem}[section]
\newtheorem{Lemma}[Theorem]{Lemma}
\newtheorem{TheoremA}{Theorem}
\theoremstyle{definition}
\newtheorem{Definition}[Theorem]{Definition} 
\newtheorem{Remark}[Theorem]{Remark}
\newtheorem{Example}[Theorem]{Example}
\definecolor{yellow}{RGB}{221, 170, 51}
\definecolor{red}{RGB}{187, 85, 102}
\definecolor{blue}{RGB}{0, 68, 136}
\newcommand{\N}{\mathbb{N}}
\newcommand{\R}{\mathbb{R}}
\newcommand{\Z}{\mathbb{Z}}
\newcommand{\ord}{\mathrm{ord}}
\newcommand{\bns}[1]{\Sigma^1(#1)}
\newcommand{\obns}[1]{\Sigma^1_\ord(#1)}
\newcommand{\relS}[2]{S(#1, #2)}
\newcommand{\orelS}[2]{S_\ord(#1, #2)}
\newcommand{\grp}[2]{{\langle #1 \mid #2 \rangle}}
\newcommand{\bgrp}[2]{{\Bigl\langle #1 \bigm\vert #2 \Bigr\rangle}}
\newcommand{\grpg}[1]{{\langle #1 \rangle}}
\newsavebox{\@brx}
\newcommand{\llangle}[1][]{\savebox{\@brx}{\(\m@th{#1\langle}\)}%
  \mathopen{\copy\@brx\mkern2mu\kern-0.9\wd\@brx\usebox{\@brx}}}
\newcommand{\rrangle}[1][]{\savebox{\@brx}{\(\m@th{#1\rangle}\)}%
  \mathclose{\copy\@brx\mkern2mu\kern-0.9\wd\@brx\usebox{\@brx}}}
\newcommand{\ngrpg}[1]{{\llangle #1 \rrangle}}
\newcommand{\Cay}[1]{\operatorname{Cay}(#1)}
\newcommand{\Hom}{\operatorname{Hom}}
\newcommand{\rk}{\operatorname{rk}}
\newcommand{\ab}{\mathrm{ab}}
\renewcommand{\land}{~\mathrm{and}~}
\renewcommand{\lor}{~\mathrm{or}~}
\renewcommand{\leq}{\mathop{\leqslant}} 
\renewcommand{\geq}{\mathop{\geqslant}}
\newcommand{\inv}[1]{{#1}^{-1}}
\newcommand{\restr}[2]{\mathrel{{#1}_{|{#2}}}}
\newcommand{\comm}[2]{{[{#1},{#2}]}}
\newcommand{\onto}{\twoheadrightarrow}
\newcommand{\into}{\hookrightarrow}
\newcommand{\ordl}{\mathrel{\prec}}
\newcommand{\ordle}{\mathrel{\preccurlyeq}}
\newcommand{\ordnl}{\mathrel{\nprec}}
\newcommand{\ordg}{\mathrel{\succ}}
\newcommand{\ordge}{\mathrel{\succcurlyeq}}
\newcommand{\indol}[2]{\mathrel{#1^{#2}}}
\newcommand{\indor}[2]{\mathrel{#1^{\inv{#2}}}}
\DeclareFontFamily{U}{mathb}{\hyphenchar\font45}
\DeclareFontShape{U}{mathb}{m}{n}{
	<-6> mathb5 <6-7> mathb6 <7-8> mathb7
	<8-9> mathb8 <9-10> mathb9
	<10-12> mathb10 <12-> mathb12
}{}
\DeclareSymbolFont{mathb}{U}{mathb}{m}{n}
\DeclareMathSymbol{\ordll}{\mathrel}{mathb}{"CE}
\DeclareMathSymbol{\ordgg}{\mathrel}{mathb}{"CF}
\newcommand{\posconex}[2]{{#1}^{#2}}
\newcommand{\poscone}[1]{\posconex{#1}{\ordg}}
\def\normsub{\ThisStyle{\mathrel{%
  \stackinset{r}{.75pt+.15\LMpt}{t}{.1\LMpt}{\rule{.3pt}{1.1\LMex+.2ex}}{\SavedStyle\leqslant}%
}}}
\newcommand{\subg}{\leqslant}
\newcommand{\supg}{\geqslant}
\begin{document}
\title{Sigma invariants for partial orders on nilpotent groups}
\author{Kevin Klinge}
\address{Faculty of Mathematics, Karlsruhe Institute of Technology, Englerstraße 2, 76131 Karlsruhe, Germany}
\email{kevin.klinge@kit.edu}

\begin{abstract}
	We prove that a map onto a nilpotent group \(Q\) has finitely generated kernel if and only if the preimage of the positive cone is coarsely connected as a subset of the Cayley graph for every full archimedean partial order on \(Q\).
	In case \(Q\) is abelian, we recover the classical theorem that \(N\) is finitely generated if and only if \(S(G,N) \subseteq \bns G\).
	Furthermore, we provide a way to construct all such orders on nilpotent groups.
	A key step is to translate the classical setting based on characters into a language of orders on \(G\).
\end{abstract}

\maketitle

\section{Introduction}

Let \(G\) be a finitely generated group and \(N \normsub G\) a normal subgroup.
A natural question to ask is in which cases \(N\) is also finitely generated.
A well known answer if \(G/N\) is abelian lies in the \(\Sigma\)-invariant via the following theorem.

\begin{Theorem}[Bieri, Neumann, Strebel \cite{BNS}]\label{intro-bns-iff-fg}
	Let \(G\) be a finitely generated group and \(N \normsub G\) a normal subgroup such that \(G/N\) is abelian.
	Then \(N\) is finitely generated if and only if \(\Phi \in \bns G\) for any character \mbox{\(\Phi\colon G \to \R\)} that factors through \(G/N\).
\end{Theorem}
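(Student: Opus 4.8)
The plan is to separate the two implications and, throughout, to reduce to the case where the quotient is free abelian. Write \(Q = G/N\); since \(G\) is finitely generated, \(Q\) is a finitely generated abelian group, so \(Q \cong \Z^n \times T\) with \(T\) finite. Let \(N' \normsub G\) be the preimage of \(T\). Then \(N'\) contains \(N\) with finite index, so \(N\) is finitely generated if and only if \(N'\) is; moreover a real character kills all torsion, so \(\relS{G}{N} = \relS{G}{N'}\) and \(G/N' \cong \Z^n\). Hence I may assume from the start that \(Q = \Z^n\). I will use the standard description of \(\bns{G}\): fixing a finite generating set, \([\Phi] \in \bns{G}\) precisely when the full subgraph of the Cayley graph on the vertex set \(G_\Phi = \{g \in G : \Phi(g) \geq 0\}\) is connected (independently of the chosen generating set). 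Two facts feed into the argument: that \(\bns{\Z^n}\) is the whole character sphere (all half-spaces in the standard grid are connected), and the foundational fibering criterion that for a surjection \(\Phi \colon G \onto \Z\) with kernel \(H\) one has \(H\) finitely generated if and only if both \([\Phi]\) and \([-\Phi]\) lie in \(\bns{G}\).

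For the direction ``\(N\) finitely generated \(\Rightarrow \relS{G}{N} \subseteq \bns{G}\)'', choose generators \(b_1,\dots,b_m\) of \(N\) together with lifts \(t_1,\dots,t_n \in G\) of a basis of \(Q = \Z^n\); these generate \(G\). Fix \([\Phi] \in \relS{G}{N}\), so \(\Phi\) vanishes on \(N\) and descends to \(\bar\Phi\) on \(Q\). Given \(g \in G_\Phi\), I first connect \(1\) to a vertex \(g'\) in the same \(N\)-coset as \(g\): project a path from \(0\) to the image of \(g\) in \(\Z^n\) that stays in the half-space \(\bar\Phi \geq 0\) (possible since \(\bns{\Z^n}\) is everything) and lift it through the \(t_i\), which tracks \(\Phi\) exactly and so stays in \(G_\Phi\). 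It remains to connect \(g'\) to \(g\) within their common \(N\)-coset; writing \(\inv{g'}g \in N\) as a word in the \(b_j\) gives such a path, and every vertex on it has the same image in \(Q\) as \(g\), hence the constant value \(\Phi(g) \geq 0\). Thus \(G_\Phi\) is connected and \([\Phi] \in \bns{G}\).

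The reverse implication is the substantial one, and I would prove it by induction on \(n = \rk Q\). The cases \(n \leq 1\) are exactly the fibering criterion quoted above. For \(n \geq 2\), pick a primitive integral character \(\Phi_0 \in \relS{G}{N}\) and set \(H = \ker \Phi_0 \supseteq N\), so that \(G/H \cong \Z\) and \(H/N \cong \Z^{n-1}\). Since \([\pm\Phi_0] \in \relS{G}{N} \subseteq \bns{G}\), the fibering criterion already gives that \(H\) is finitely generated. To finish by the inductive hypothesis applied to \(N \normsub H\), I must verify \(\relS{H}{N} \subseteq \bns{H}\). Every \([\psi] \in \relS{H}{N}\) is the restriction of some \([\Phi] \in \relS{G}{N} \subseteq \bns{G}\), because \(H/N\) is a direct summand of \(G/N = \Z^n\) (the sequence splits as \(G/H\) is free). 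So the whole theorem comes down to the following descent step, which I expect to be the main obstacle: if \(\Phi \in \bns{G}\) restricts to \(\psi \neq 0\) on the finitely generated normal subgroup \(H = \ker\Phi_0\), then \(\psi \in \bns{H}\). Here the freedom to replace \(\Phi\) by \(\Phi + s\Phi_0\) — which fixes \(\psi\) and keeps us inside \(\relS{G}{N}\) — should be exploited. The difficulty is that connectivity of \(G_\Phi\) only supplies paths in \(G\) that stray off \(H\) across the \(\Z\)-levels of \(\Phi_0\); the task is to push such a path back into \(H_\psi = G_\Phi \cap H\), and I would do this by a bounded-detour argument that replaces each excursion to level \(k \neq 0\) by a path through \(H\), using finite generation of \(H\) together with the conjugation action of a fixed \(t\) with \(\Phi_0(t) = 1\) to control how far the rewriting moves vertices in the \(\Phi\)-direction.
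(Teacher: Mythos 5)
The first thing to note is that the paper contains no proof of this statement: \Cref{intro-bns-iff-fg} and its restatement \Cref{original-bns-iff-fg-abelian} are imported directly from \cite{BNS} (with \cite{Strebel-notes} cited for the equivalence of definitions). So your proposal can only be judged on its own terms, and there it has a genuine gap. Your reduction to \(Q \cong \Z^n\) and your proof of the direction ``\(N\) finitely generated \(\Rightarrow \relS{G}{N} \subseteq \bns{G}\)'' are correct, and quoting the rank-one fibering criterion as a black box is defensible (though it is itself the core theorem of \cite{BNS}, so even your base case is assumed rather than proved). The problem is the descent step that your induction stands on, which you flag as ``the main obstacle'' and never prove. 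As you state it --- if \(H = \ker\Phi_0 \normsub G\) is finitely generated with \(G/H \cong \Z\), and \(\Phi \in \bns{G}\) restricts to \(\psi \neq 0\) on \(H\), then \(\psi \in \bns{H}\) --- it is \emph{false}, so no amount of bounded-detour rewriting can establish it. Concretely, let \(F_2\) be free on \(a,b\), let \(\phi\) be the automorphism \(a \mapsto a\), \(b \mapsto ba\), and let \(G = F_2 \rtimes_\phi \Z = \grpg{a,b,t \mid tat^{-1}=a,\ tbt^{-1}=ba}\). The fibration character \(\Phi_0\) (with \(\Phi_0(t)=1\), \(\Phi_0(a)=\Phi_0(b)=0\)) has finitely generated kernel \(H = F_2\), so by the criterion you quote \([\pm\Phi_0] \in \bns{G}\). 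Since \(\bns{G}\) is open in the character sphere (Theorem A of \cite{BNS}), the characters \(\Phi_s\) with \(\Phi_s(t)=1\), \(\Phi_s(b)=s\), \(\Phi_s(a)=0\) lie in \(\bns{G}\) for all sufficiently small \(s \neq 0\); each restricts to a nonzero character on \(H = F_2\), yet \(\bns{F_2} = \emptyset\).

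The failure is instructive about where the real content of the theorem lies. Any correct descent argument must use the full inductive hypothesis \(\relS{G}{N} \subseteq \bns{G}\), i.e.\ that the \emph{entire} family \([\Phi + s\Phi_0]\), \(s \in \R\), together with \([\pm\Phi_0]\), lies in \(\bns{G}\) --- not merely that the single character \(\Phi\) does. (In the example above this stronger hypothesis genuinely fails: it would force \([G,G]\), which is a free group of infinite rank, to be finitely generated.) You remark that the freedom to replace \(\Phi\) by \(\Phi + s\Phi_0\) ``should be exploited,'' but your proposed argument never does so: it takes as input only \(\Phi \in \bns{G}\), finite generation of \(H\), and the conjugation action of \(t\), and the counterexample shows these inputs are insufficient. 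Bridging exactly this gap --- passing from information about a whole subsphere of characters to finite generation --- is what the compactness-plus-openness argument of \cite{BNS} accomplishes, and it is the part of the theorem that remains unproved in your proposal. As it stands, you have established the easy implication and reduced the hard one to a lemma that is false as stated.
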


By \emph{characters}, we mean group homomorphisms to \((\R, +)\), up to multiplication by a positive number.
\(\bns G\) is the set of characters such that the full subgraph of the Cayley graph of \(G\) spanned by \(\inv\Phi([0,\infty))\) is connected.
We denote the set of characters that factor through \(G/N\) by \(\relS G N\). It is also called the \emph{relative character sphere}.
In this notation, \Cref{intro-bns-iff-fg} asks if \(\relS G N \subseteq \bns G\).

We answer the question of finite generatedness in the larger case \(G/N\) nilpotent by introducing \(\obns G\) and \(\orelS G N\) to be sets of certain partial orders on \(G\).

\begin{TheoremA}[\Cref{bns-iff-fg}]\label{intro-bns-iff-fg-nilpotent}
	Let \(G\) be a finitely generated group and \(N \normsub G\) a normal subgroup such that \(G/N\) is nilpotent.
	Then \(N\) is finitely generated if and only if \(\orelS G N \subseteq \obns G\).
\end{TheoremA}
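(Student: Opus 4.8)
The plan is to prove the two implications separately, using throughout the translation (developed earlier) that an order lies in \(\obns{G}\) precisely when its positive cone \(\poscone{G} = \{g \in G : g \ordge 1\}\) spans a coarsely connected full subgraph of the Cayley graph, and that \(\orelS{G}{N}\) consists exactly of the pullbacks along \(G \onto G/N\) of the full archimedean partial orders on the nilpotent quotient \(Q = G/N\). Since such an order is constant on \(N\)-cosets, its positive cone in \(G\) is the union of those \(N\)-cosets lying over the positive cone \(\poscone{Q}\) in \(Q\); this bookkeeping is what lets me separate the ``direction'' information (living on \(Q\)) from the ``fibre'' information (living on \(N\)).

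For the necessity direction I assume \(N\) is finitely generated and fix an order \(\ordge \in \orelS{G}{N}\), connecting two positive elements in two stages. First, a finite generating set of \(N\) connects, inside \(\Cay{G}\), any two elements of the same \(N\)-coset; as whole positive cosets lie in \(\poscone{G}\), this stays inside the positive cone. Second, I reduce connecting distinct positive cosets to coarse connectivity of \(\poscone{Q}\) in \(\Cay{Q}\): a positive path in \(\Cay{Q}\) lifts to a path through positive cosets in \(\Cay{G}\), and the within-coset stage repairs the endpoints. The required connectivity in \(Q\) holds because \(Q\) is finitely generated nilpotent, hence Noetherian, so every character kernel is finitely generated and \(\bns{Q} = \orelS{Q}{1}\) is the whole sphere; via Hölder's theorem the archimedean order pushes to a character whose nonnegative half-space coincides, up to bounded error, with \(\poscone{Q}\). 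Combining the two stages yields coarse connectivity of \(\poscone{G}\), i.e.\ \(\ordge \in \obns{G}\).

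For the sufficiency direction I argue by contraposition and induction on the nilpotency class of \(Q\), the class-one case (abelian) being exactly the classical theorem of Bieri--Neumann--Strebel (\Cref{intro-bns-iff-fg}) after rephrasing characters as orders. Assuming \(N\) is not finitely generated, I want to produce an order in \(\orelS{G}{N} \setminus \obns{G}\). Writing \(Z\) for the last nontrivial term of the lower central series of \(Q\) and \(\bar Q = Q/Z\), I use the explicit construction of full archimedean partial orders to assemble the desired order on \(Q\) from an order on \(\bar Q\) together with the central data on \(Z\); infinite generation of \(N\) forces a failure of coarse connectivity in at least one of these layers, and that failure survives the pullback to \(G\) because joining the relevant \(N\)-cosets inside the positive cone would demand arbitrarily long excursions, violating the coarse-connectivity bound.

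The main obstacle is this inductive step. Unlike the abelian case, a single real character only sees \(Q_{\ab}\), so the obstruction to finite generation of \(N\) may be invisible on the abelianization and detectable only by an order resolving deeper terms of the lower central series. The crux is therefore to show that the construction of orders turns any witness to infinite generation of \(N\) into a full archimedean partial order whose positive cone has unbounded gaps in \(\Cay{G}\), and to verify that both the archimedean property and fullness are preserved under the central-extension step, so that the constructed order genuinely lies in \(\orelS{G}{N}\). I expect that making this lifting precise, rather than the geometric coset bookkeeping, is where the real work lies.
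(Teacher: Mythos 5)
Your proposal has genuine gaps in both directions. In the necessity direction, the key step fails: you invoke H\"older's theorem to push the archimedean order to a character on \(Q\) ``whose nonnegative half-space coincides, up to bounded error, with \(\poscone Q\)''. H\"older's theorem applies to \emph{total} archimedean orders, and a full archimedean \emph{partial} order on a non-abelian nilpotent \(Q\) is in general not induced by any character on \(Q\): by \Cref{characterisation-nilpotent-orders} it is induced by a character on \(Z(Q/P)\) for some \(P \normsub Q\). Concretely, the Heisenberg group carries the full archimedean order with positive cone \(\{\comm ab^k \mid k > 0\}\); every character kills \(\comm ab\), so no character half-space is within bounded distance of this cone. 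These are precisely the new cases the theorem is about, and your lifting argument collapses on them. A related error: membership in \(\obns G\) requires coarse connectedness of \(K_{\ordge}\) for \(K\) a maximal antichain \emph{normal} subgroup, and for a pulled-back order this is \(K = \inv\Phi(P) = \ker(\pi\circ\Phi)\) (see \Cref{ker-max-ac} and the proof of \Cref{bns-pass-to-subgroup}), which strictly contains \(N\) whenever \(P \neq 1\); so your two-stage argument connects the wrong set, fibred over the wrong cosets. The paper's route instead restricts to the subgroup \(H = \inv{(\pi\circ\Phi)}(Z(Q/P)) \subseteq G\), observes that \(\ker(\pi\circ\Phi)\) is an extension of the finitely generated group \(P\) by the finitely generated group \(N\), applies the abelian theorem (\Cref{original-bns-iff-fg-abelian}) to \(H \onto Z(Q/P)\), and transfers the conclusion back to \(G\) via \Cref{bns-pass-to-subgroup}.

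In the sufficiency direction you have a plan, not a proof: you acknowledge yourself that the inductive central-extension step --- converting a witness of infinite generation of \(N\) into an order in \(\orelS GN \setminus \obns G\) while preserving fullness and the archimedean property --- is ``where the real work lies'', and that work is absent. The paper needs neither contraposition nor induction on the nilpotency class here. It inserts the intermediate condition \(\orelS{\bar Z}{N} \subseteq \obns{\bar Z}\) with \(\bar Z = \inv\Phi(Z(Q))\): orders on \(\bar Z/N \cong Z(Q)\) extend to orders in \(\orelS GN\) by \Cref{rel-sphere-pass-to-subgroup}, which gives the implication from your hypothesis to this condition; then the definition of coarse connectedness inside the (possibly infinitely generated) group \(\bar Z\) forces the existence of a finitely generated subgroup \(H \subseteq \bar Z\) containing \(N\), and since \(H/N \subg Z(Q)\) is abelian, the classical theorem applied to \(H\) yields finite generation of \(N\). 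Salvaging your contrapositive approach would essentially amount to rediscovering this reduction, so the missing step is not a technicality but the core of the theorem.
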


Here, \(\obns G\) contains those orders such that for any maximal normal antichain subgroup \(K \normsub G\), the subset of \(G\) above \(K\) is coarsely connected.
\(\orelS G N\) are \emph{full archimedean} orders, such that \(N\) is an antichain.
In case \(G/N\) is abelian, these are in 1-to-1 correspondence with \(\bns G\) and \(\relS G N\) respectively.

To see why we need this transition to orders, an important observation is that any character \(G \to \R\) that factors through \(G/N\) also factors through the abelianisation of \(G/N\) as indicated in \Cref{diagram-char-factors-through-ab}.
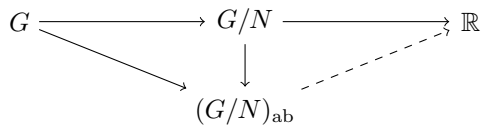
\begin{figure}
	\centering
\begin{tikzpicture}[shorten >=1pt,on grid,auto]

	\node[] (G) {\(G\)};
	\node[] (GN) [right=3 of G] {\(G/N\)};
	\node[] (R) [right=3 of GN] {\(\R\)};
	\node[] (ab) [below=1.2 of GN] {\((G/N)_\ab\)};
	\path[->]
		(G) edge (GN)
		(GN) edge (R)
		(G) edge (ab)
		(GN) edge (ab)
		(ab) edge [dashed] (R)
	;
\end{tikzpicture}
	\caption{Any map \(G \to \R\) whose kernel contains \(N\) factors through \((G/N)_\ab\).}
	\label{diagram-char-factors-through-ab}
\end{figure}
Hence \(G/N\) abelian is a necessary condition for \Cref{intro-bns-iff-fg}.
However, every character induces a partial order on \(G\) by pulling the standard order on \(\R\) back to \(G\).
We show that we can recognise orders induced by characters that factor through \(G/N\) by introducing the notion of \emph{full archimedean} orders on \(G\) such that \(N\) is an antichain.
Using this correspondence, \Cref{intro-bns-iff-fg} is the same as \Cref{intro-bns-iff-fg-nilpotent} in the special case where \(G/N\) is abelian.

In the more general case \(G/N\) nilpotent, full archimedean orders and characters are no longer in 1-to-1 correspondence.
We provide a similar description of those orders in the nilpotent case.
Still, every character on \(G\) induces an order.
But characters defined on subgroups also induce orders that are full archimedean.
The following theorem makes this relation precise.
\begin{TheoremA}[\Cref{characterisation-nilpotent-orders}]\label{intro-classification-nilpotent-orders}
	Let \(Q\) be a nilpotent group and \(\ordl\) a full archimedean partial order on \(Q\).

	Then there exists a normal subgroup \(P \normsub Q\) and an injective character on the center \mbox{\(\iota\colon Z(Q/P) \into \R\)} such that \[
		1 \ordl g ~\text{if and only if}~ gP ~\text{is central in}~ Q/P ~\text{and}~ \iota(gP) > 0.
	\]
\end{TheoremA}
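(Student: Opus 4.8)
The plan is to read off both $P$ and $\iota$ directly from the positive cone $\poscone{Q} = \{g \in Q : 1 \ordl g\}$. First I would take $P$ to be the stabiliser of the cone, $P = \{g \in Q : g\poscone{Q} = \poscone{Q}\}$. Bi-invariance of $\ordl$ makes $P$ a normal subgroup, and substituting $h = 1$ into the defining condition forces every element of $P$ to be incomparable to (or equal to) $1$; in particular $P \cap \poscone{Q} = \emptyset$ and $P$ is an antichain subgroup. By construction $\poscone{Q}$ is a union of cosets of $P$, so the order descends to a bi-invariant partial order on $Q/P$ whose positive cone $\overline{\poscone{Q}}$ has trivial stabiliser. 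Replacing $Q$ by $Q/P$ I may therefore assume the order is \emph{reduced}; the subgroup $P$ obtained in this way is exactly the identity coset required by the statement, and it remains to understand the reduced order.

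The core of the argument is to show that for the reduced order the positive cone is central, $\overline{\poscone{Q}} \subseteq Z(Q/P)$; this is where nilpotency and the archimedean hypothesis interact. Suppose $1 \ordl c$ and fix $g$. Bi-invariance gives $1 \ordl g^{-1} c g$, so the conjugate $c^{g}$ is again positive, and $c^{g} = c\, \comm{c}{g}$. If the commutator $w = \comm{c}{g}$ satisfied $1 \ordl w$, then $c \ordl c^{g}$, and conjugating this relation repeatedly by $g$ would produce an infinite strictly ascending chain $c \ordl c^{g} \ordl c^{g^{2}} \ordl \cdots$ consisting entirely of conjugates of the single element $c$. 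I expect the archimedean property to be incompatible with such a chain of conjugates of a fixed positive element, forcing $1 \ordnl w$, and symmetrically $w \ordnl 1$ after replacing $g$ by $\inv g$, so that $w$ is incomparable to $1$. Upgrading this from \emph{incomparable to $1$} to the stronger conclusion $w \in P$---which is what centrality of $\overline c$ really demands---is where the nilpotent structure enters: I would induct on the nilpotency class, controlling how $\poscone{Q}$ meets the lower central series, whose deepest term is central and on which the order restricts. \textbf{Making this promotion precise is the main obstacle}; the remaining steps are formal.

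Finally I would produce $\iota$. Granting $\overline{\poscone{Q}} \subseteq Z(Q/P)$, the order restricts to the abelian group $Z(Q/P)$, and fullness ensures that every nonidentity element there is comparable to $1$, so this restriction is a \emph{total} archimedean order. Hölder's theorem then gives an order-preserving embedding $\iota \colon Z(Q/P) \into \R$, automatically a character since $Z(Q/P)$ is abelian, whose positive part is exactly $\overline{\poscone{Q}}$. Unwinding the quotient gives the statement: if $1 \ordl g$ then $gP \in \overline{\poscone{Q}} \subseteq Z(Q/P)$ with $\iota(gP) > 0$ by the centrality step and the definition of $\iota$; conversely, if $gP$ is central with $\iota(gP) > 0$ then $gP \in \overline{\poscone{Q}}$, and since $\poscone{Q}$ is a union of $P$-cosets this lifts to $g \in \poscone{Q}$, that is, $1 \ordl g$.
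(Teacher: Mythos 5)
Your overall architecture is sound and genuinely different in organisation from the paper's: you quotient once by the stabiliser of the positive cone and then aim to prove the cone is central in the quotient, whereas the paper runs an induction (ordered by the ranks of the lower-central-series quotients) that strips off one central cyclic antichain subgroup at a time, with base case ``$Z(Q)$ totally ordered''. Your reduction step (the stabiliser $P$ is normal, is an antichain, the cone is $P$-saturated, and the order descends -- modulo the routine check that fullness and the archimedean property descend as well) and your endgame (primitivity plus factorizing force the restriction to $Z(Q/P)$ to be total, then Hölder gives $\iota$) are both correct. But the step you yourself flag as ``the main obstacle'' is not a loose end: it is the entire technical content of the theorem, and what you offer in its place does not work as stated.

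Concretely, two things are missing. First, the claim that the ascending chain $c \ordl c^{g} \ordl c^{g^{2}} \ordl \cdots$ is incompatible with the archimedean property does not follow from the definitions: archimedean only forbids a positive element $w$ with $w^{i} \ordl h$ for all $i \in \Z$ and a fixed $h$. To extract such an element from the chain you must compare $c^{g^{n}} = c\,w\,w^{g}\cdots w^{g^{n-1}}$ with the powers $w^{n}$, and as soon as the nilpotency class exceeds $2$ these differ by error terms lying deeper in the lower central series. Taming those error terms is exactly what the paper's \Cref{commutator-infinitesimal} does, by induction along the lower central series of the normal closure of $g$, and its proof crucially invokes \emph{primitivity} to pass from $g^{2} \ordg \comm{g}{h}^{2k}$ to $g \ordg \comm{g}{h}^{k}$; fullness never enters your core step at all, which is a warning sign, since archimedean alone is not claimed to suffice even by the paper. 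Second, the promotion from ``$\comm{c}{g}$ is incomparable to $1$'' to ``$\comm{c}{g} \in P$'' is asserted but not even sketched. Note that primitivity does make $\grpg{\comm{c}{g}}$ an antichain subgroup, but that subgroup need not be \emph{normal}, so the factorizing property cannot be applied to it directly; escaping this is precisely why the paper quotients by central cyclic antichain subgroups (normal for free, since central) and inducts, applying \Cref{commutator-infinitesimal} only in the base case where the center is totally ordered. Without these two steps your proposal establishes the reduction and the Hölder endgame, but not the theorem.
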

We will also see that every such character induces a full archimedean order on \(G\), so this is a complete characterisation of full archimedean orders.

\bigskip
In \cite{Kielak}, Kielak discusses the question if \(G\) fibres. That is, if there exists a finitely generated normal subgroup \(N \normsub G\) such that \(G/N \cong \Z\).
He provides an answer for a certain class of groups, namely if \(G\) is virtually RFRS.
A group that is both RFRS and nilpotent is necessarily abelian.
In this sense, RFRS and nilpotent are orthogonal properties.
Thus, nilpotent groups are a natural class to study in the context of fibrations.
\Cref{intro-bns-iff-fg} plays an important role in Kielak's work.
To see if his proof can be adapted to provide an answer for a larger class of groups by using \Cref{intro-bns-iff-fg-nilpotent} instead will be the subject of future research.

In this work we concern ourselves only with the first \(\Sigma\)-invariant.
There are analogues of \(\bns G\) introduced in \cite{Renz} that correspond to higher finiteness properties of a character's kernel.
The result on fibring of \(G\) also holds analogously for higher \(\Sigma\)-invariants as was shown in \cite{Fisher}.
In this context it would be interesting to find analogues of higher \(\Sigma\)-invariants in the language of orders.

While we have seen that \Cref{intro-bns-iff-fg} is in general not true if \(G/N\) is non-abelian, there is no reason obvious to me why a restriction on \(G/N\) is strictly required for \Cref{intro-bns-iff-fg-nilpotent}.
The reason why we include the condition \(G/N\) nilpotent is because our proof heavily depends on it. We use the fact that nilpotent groups are relatively close to abelian groups to reduce \Cref{intro-bns-iff-fg-nilpotent} to the known abelian case.
It would be interesting to know if \Cref{intro-bns-iff-fg-nilpotent} is true for an even larger class of groups.

The authors in \cite{MNS} ask in which situations characters on a subgroup \(H \subg G\) extend to a character on \(G\) and how this behaves with respect to \(\Sigma\)-invariants.
We will study partial orders on subgroups and see how they extend to the whole group.
\Cref{intro-classification-nilpotent-orders} may be understood as a decomposition of the space of orders on a nilpotent group \(Q\) into character spheres of certain subgroups.
One important step in the proof of \Cref{intro-bns-iff-fg-nilpotent} will be to understand how this lifts to the space of orders of an arbitrary finitely generated group \(G\) that maps onto a nilpotent group.
This partially answers how the \(\Sigma\)-invariant and character sphere behave when passing to subgroups within the language of orders.

\bigskip
An important example to keep in mind is the case \(G/N \cong \Z\).
Then there are only two characters that factor through \(G/N\), namely the projection map and the projection map concatenated with multiplication by \(-1\).
Hence \Cref{intro-bns-iff-fg} states that \(N\) is finitely generated if and only if the preimages of the positive and of the negative numbers under the projection map are connected as subsets of the Cayley graph.
As an example have a look at \Cref{fig-cay-fg}.
We see the case where \(G = \Z^2\) and \(N\) is the subgroup generated by one of the free generators.
Here, \(N\) is finitely generated and both the positive and negative half of the Cayley graph are connected.
On the other hand, if \(G\) is the free group on two generators, then the kernel of the projection onto one of the generators is not finitely generated and each of the two halves is disconnected.

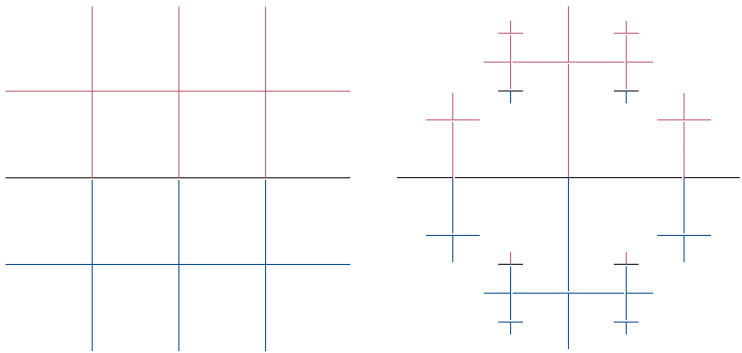
\begin{figure}
	\centering
	\begin{subfigure}{.4\textwidth}
		\resizebox{\textwidth}{!}{
			\begin{tikzpicture}[shorten >=1pt,on grid,auto]

	\node[] (00) {};
	\node[] (01) [right=6 of 00] {};
	\node[] (10) [above=1.5 of 00] {};
	\node[] (11) [right=6 of 10] {};
	\node[] (20) [above=1.5 of 10] {};
	\node[] (21) [right=6 of 20] {};

	\node[] (v00) [below right=1.5 and 1.5 of 00] {};
	\node[] (v01) [right=1.5 of 10] {};
	\node[] (v02) [above=6 of v00] {};
	\node[] (v10) [right=1.5 of v00] {};
	\node[] (v11) [right=1.5 of v01] {};
	\node[] (v12) [right=1.5 of v02] {};
	\node[] (v20) [right=1.5 of v10] {};
	\node[] (v21) [right=1.5 of v11] {};
	\node[] (v22) [right=1.5 of v12] {};


	\path[-]
		(00.center) edge [blue] (01.center)
		(10.center) edge (11.center)
		(20.center) edge [red] (21.center)

		(v00.center) edge [blue] (v01.center)
		(v01.center) edge [red] (v02.center)
		(v10.center) edge [blue] (v11.center)
		(v11.center) edge [red] (v12.center)
		(v20.center) edge [blue] (v21.center)
		(v21.center) edge [red] (v22.center)

	;
\end{tikzpicture}
		}
	\end{subfigure}
	\begin{subfigure}{.4\textwidth}
		\resizebox{\textwidth}{!}{
			\begin{tikzpicture}[scale=.8, shorten >=1pt,on grid,auto]

	\node[] (1) {};
	\node[] (a) [right=2 of 1] {};
	\node[] (A) [left=2 of 1] {};
	\node[] (b) [above=2 of 1] {};
	\node[] (B) [below=2 of 1] {};

	\node[] (aa) [right=1 of a] {};
	\node[] (ab) [above=1 of a] {};
	\node[] (aB) [below=1 of a] {};

	\node[] (aba) [right=.5 of ab] {};
	\node[] (abb) [above=.5 of ab] {};
	\node[] (abA) [left=.5 of ab] {};

	\node[] (aBa) [right=.5 of aB] {};
	\node[] (aBB) [below=.5 of aB] {};
	\node[] (aBA) [left=.5 of aB] {};

	\node[] (AA) [left=1 of A] {};
	\node[] (Ab) [above=1 of A] {};
	\node[] (AB) [below=1 of A] {};

	\node[] (Aba) [right=.5 of Ab] {};
	\node[] (Abb) [above=.5 of Ab] {};
	\node[] (AbA) [left=.5 of Ab] {};

	\node[] (ABa) [right=.5 of AB] {};
	\node[] (ABB) [below=.5 of AB] {};
	\node[] (ABA) [left=.5 of AB] {};

	\node[] (ba) [right=1 of b] {};
	\node[] (bb) [above=1 of b] {};
	\node[] (bA) [left=1 of b] {};

	\node[] (baa) [right=.5 of ba] {};
	\node[] (bab) [above=.5 of ba] {};
	\node[] (baB) [below=.5 of ba] {};

	\node[] (baba) [right=.25 of bab] {};
	\node[] (babb) [above=.25 of bab] {};
	\node[] (babA) [left=.25 of bab] {};

	\node[] (baBa) [right=.25 of baB] {};
	\node[] (baBB) [below=.25 of baB] {};
	\node[] (baBA) [left=.25 of baB] {};

	\node[] (bAA) [left=.5 of bA] {};
	\node[] (bAb) [above=.5 of bA] {};
	\node[] (bAB) [below=.5 of bA] {};

	\node[] (bAba) [right=.25 of bAb] {};
	\node[] (bAbb) [above=.25 of bAb] {};
	\node[] (bAbA) [left=.25 of bAb] {};

	\node[] (bABa) [right=.25 of bAB] {};
	\node[] (bABB) [below=.25 of bAB] {};
	\node[] (bABA) [left=.25 of bAB] {};

	\node[] (Ba) [right=1 of B] {};
	\node[] (BB) [below=1 of B] {};
	\node[] (BA) [left=1 of B] {};

	\node[] (Baa) [right=.5 of Ba] {};
	\node[] (Bab) [above=.5 of Ba] {};
	\node[] (BaB) [below=.5 of Ba] {};

	\node[] (Baba) [right=.25 of Bab] {};
	\node[] (Babb) [above=.25 of Bab] {};
	\node[] (BabA) [left=.25 of Bab] {};

	\node[] (BaBa) [right=.25 of BaB] {};
	\node[] (BaBB) [below=.25 of BaB] {};
	\node[] (BaBA) [left=.25 of BaB] {};

	\node[] (BAA) [left=.5 of BA] {};
	\node[] (BAb) [above=.5 of BA] {};
	\node[] (BAB) [below=.5 of BA] {};

	\node[] (BAba) [right=.25 of BAb] {};
	\node[] (BAbb) [above=.25 of BAb] {};
	\node[] (BAbA) [left=.25 of BAb] {};

	\node[] (BABa) [right=.25 of BAB] {};
	\node[] (BABB) [below=.25 of BAB] {};
	\node[] (BABA) [left=.25 of BAB] {};

	\path[-]
		(1.center) edge (a.center)
		(1.center) edge (A.center)
		(1.center) edge [red] (b.center)
		(1.center) edge [blue] (B.center)

		(a.center) edge (aa.center)
		(a.center) edge [red] (ab.center)
		(a.center) edge [blue] (aB.center)

		(ab.center) edge [red] (abA.center)
		(ab.center) edge [red] (abb.center)
		(ab.center) edge [red] (aba.center)

		(aB.center) edge [blue] (aBA.center)
		(aB.center) edge [blue] (aBB.center)
		(aB.center) edge [blue] (aBa.center)

		(A.center) edge (AA.center)
		(A.center) edge [red] (Ab.center)
		(A.center) edge [blue] (AB.center)

		(Ab.center) edge [red] (AbA.center)
		(Ab.center) edge [red] (Abb.center)
		(Ab.center) edge [red] (Aba.center)

		(AB.center) edge [blue] (ABA.center)
		(AB.center) edge [blue] (ABB.center)
		(AB.center) edge [blue] (ABa.center)

		(b.center) edge [red] (ba.center)
		(b.center) edge [red] (bb.center)
		(b.center) edge [red] (bA.center)

		(ba.center) edge [red] (bab.center)
		(ba.center) edge [red] (baa.center)
		(ba.center) edge [red] (baB.center)

		(bab.center) edge [red] (baba.center)
		(bab.center) edge [red] (babA.center)
		(bab.center) edge [red] (babb.center)

		(baB.center) edge (baBa.center)
		(baB.center) edge (baBA.center)
		(baB.center) edge [blue] (baBB.center)

		(bA.center) edge [red] (bAb.center)
		(bA.center) edge [red] (bAA.center)
		(bA.center) edge [red] (bAB.center)

		(bAb.center) edge [red] (bAba.center)
		(bAb.center) edge [red] (bAbA.center)
		(bAb.center) edge [red] (bAbb.center)

		(bAB.center) edge (bABa.center)
		(bAB.center) edge (bABA.center)
		(bAB.center) edge [blue] (bABB.center)

		(B.center) edge [blue] (Ba.center)
		(B.center) edge [blue] (BB.center)
		(B.center) edge [blue] (BA.center)

		(Ba.center) edge [blue] (Bab.center)
		(Ba.center) edge [blue] (Baa.center)
		(Ba.center) edge [blue] (BaB.center)

		(Bab.center) edge (Baba.center)
		(Bab.center) edge (BabA.center)
		(Bab.center) edge [red] (Babb.center)

		(BaB.center) edge [blue] (BaBa.center)
		(BaB.center) edge [blue] (BaBA.center)
		(BaB.center) edge [blue] (BaBB.center)

		(BA.center) edge [blue] (BAb.center)
		(BA.center) edge [blue] (BAA.center)
		(BA.center) edge [blue] (BAB.center)

		(BAb.center) edge (BAba.center)
		(BAb.center) edge (BAbA.center)
		(BAb.center) edge [red] (BAbb.center)

		(BAB.center) edge [blue] (BABa.center)
		(BAB.center) edge [blue] (BABA.center)
		(BAB.center) edge [blue] (BABB.center)
	;
\end{tikzpicture}
		}
	\end{subfigure}
	\caption{Cayley graphs of \(\Z^2\) and \(F_2\) divided into positive and negative elements by the projections onto one generator, distinguished by color.}
	\label{fig-cay-fg}
\end{figure}
\subsection*{Structural notes}

Our first objective in this article is to translate characters on \(G\) into orders on \(G\) as outlined above.
This is covered by \textbf{\Cref{partially-ordered-groups}}.

In \textbf{\Cref{bns}}, we review the \(\Sigma\)-invariant and reformulate its definition in terms of orders.
We then study what orders on nilpotent groups look like in \textbf{\Cref{nilpotent-orders}} and prove \Cref{intro-classification-nilpotent-orders}.
We conclude by proving \Cref{intro-bns-iff-fg-nilpotent} in \textbf{\Cref{nilpotent}}.

\subsection*{Acknowledgements}
I would like to thank Julia Heller, Claudio Llosa Isenrich and Roman Sauer for numerous helpful conversations.
I am most grateful to Dawid Kielak for kindly hosting me in Oxford during the early stages of this work and for the many hours we spent discussing it.
While working on this article, I was supported by project 281869850 funded by the Deutsche Forschungsgemeinschaft (DFG, German research foundation).

\section{Partially ordered groups}\label{partially-ordered-groups}

We start by having a look at partially ordered groups in general.
A more thorough introduction may be found for example in \cite{Kopytov}.
It will be of special interest how we can translate between orders and maps.
As far as I am aware, there is no source where this relation is made explicit.
Some of it may be found in sources like \cite{Kopytov} or \cite{Glass}.

\begin{Definition}
	Let \(G\) be a group.
	A \emph{partial order} on \(G\) is a relation \(\ordl\) on the set \(G\) such that for all \(f,g,h \in G\)
	\begin{itemize}
		\item \(g \ordnl g\) \hfill (antireflexive)
		\item \(g \ordl h \Rightarrow h \ordnl g\) \hfill (antisymmetric)
		\item \(f \ordl g \land g \ordl h \Rightarrow f \ordl h\) \hfill (transitive)
	\end{itemize}
	Additionally, a partial order may have the following properties
	\begin{itemize}
		\item \(f \ordl g \Rightarrow hf \ordl hg\) \hfill (left invariant)
		\item \(f \ordl g \Rightarrow fh \ordl gh\) \hfill (right invariant)
	\end{itemize}
	and \(\ordl\) is said to be \emph{bi-invariant} if it satisfies both.
\end{Definition}

\begin{Remark}\begin{enumerate}\item[]
	\item
		The symbol \(=\) always refers to honest equality as elements of \(G\).

	\item
		For an order \(\ordl\), we set \[
			g \ordle h \iff g \ordl h \lor g = h.
		\]
		The symbols \(\ordg\) and \(\ordge\) are defined as the respective opposite orders.

	\item
		In the literature one often finds the definition of \((G, \ordle)\) as definition of a partially ordered group and the symbols \(\ordl, \ordge, \ordg\) are then derived.
		But I find it often makes notation easier to view \(\ordl\) as ``the order'' so that is the convention we use here.

	\item
		When we use \(<, \leqslant, >\) and \(\geqslant\), we always mean them in their well established, canonical meanings, such as the standard order on the real numbers.
\end{enumerate}\end{Remark}

Let us have a look at some important examples.
We will review them in more detail in \Cref{induced-orders} and \Cref{lexicographic-order}.

\begin{Example}\label{initial-example-po}\begin{enumerate}\item[]
	\item
		The \emph{trivial order} \(g \ordnl h\) for each pair \(g, h \in G\) is a partial order for any group \(G\).
	\item
		For any group homomorphism \(\Phi\colon G \to \R\), we can define a bi-invariant partial order \(\ordl\) on \(G\) by letting \[
			g \ordl h \iff \Phi(g) < \Phi(h).
		\]
		For \(\Phi = 0\), we obtain the trivial order on \(G\).
	\item
		For \(G = \Z^2 = \grp{a,b}{[a,b]}\), there is the \emph{lexicographic order} \[a^ib^j \ordl a^kb^l \iff i \leq k \land i=k \Rightarrow j < l.\]
\end{enumerate}\end{Example}

We say \((G, \ordl)\) or sometimes just \(G\) is an \emph{ordered group} as a shortcut for \(G\) being a group and \(\ordl\) a bi-invariant partial order on \(G\).
In this article, any order will be partial and bi-invariant unless noted otherwise.

\begin{Definition}
	Let \(G\) be an ordered group.
	\begin{enumerate}
		\item Two elements \(g, h \in G\) are called \emph{comparable} if \(g \ordl h \lor h \ordl g\) and \emph{incomparable} otherwise.
		\item An order is called \emph{total}, if any two elements are either comparable or equal.
		\item \(g \in G\) is called \emph{positive} if \(1 \ordl g\) and negative if \(g \ordl 1\).
		\item \(\poscone G \coloneqq \{g \in G \mid g \ordg 1\}\) is called the \emph{positive cone} of \((G, \ordl)\).
		\item For the inclusion of a subset \(\iota\colon S \into G\) the \emph{restriction} is the partial order \(\restr\ordl S\) on \(S\) such that for any \(s, t \in S\) \[s \restr\ordl S t \iff \iota(s) \ordl \iota(t).\]
		\item A subset \(S \subseteq G\) is called an \emph{antichain} if \(\restr\ordl S\) is the trivial order.
		\item
			An antichain \(S\) is \emph{maximal} if the only antichain containing \(S\) is \(S\) itself.
			We say that \(S\) is a \emph{maximal antichain subgroup} if it is an antichain and a subgroup and it is maximal amongst subgroups that are antichains.
		\item
			For a subset \(S \subseteq G\) and \(*\) any of \(\{\ordl, \ordg, \ordle, \ordge\}\), set \[
				S_* \coloneqq \{g \in G \mid \exists s \in S\colon g * s\}.
			\]
	\end{enumerate}
\end{Definition}

\begin{Remark}\label{torsion-is-antichain}
	\begin{enumerate}\item[]
		\item Being incomparable is in general not transitive.
		\item
			A subgroup \(H \subg G\) is an antichain if and only if \(H \cap \poscone G\) is the empty set.
			The same cannot be said if \(H\) is just any subset.
		\item
			For any subset \(S \subseteq G\) we have \(S_{\ordl} \cap S_{\ordge} = \emptyset\) if and only if \(S\) is an antichain.
			Additionally, \(S_{\ordl} \cup S_{\ordge} = G\) if and only if \(S\) is a maximal antichain.
			The latter is not always true if \(S\) is a maximal antichain subgroup as we will see for instance in \Cref{example-abelian-orders}.
		\item
			If \(g\) is a torsion element of \(G\), then \(1\) and \(g\) are necessarily incomparable since if \(g\) is positive, then \[
				1 \ordl g \ordl g^2 \ordl \dots \ordl g^n = 1
			\] contradicts antireflexivity and analogously if \(g\) is negative.

			If the set of all torsion elements \(T\) is a normal subgroup, then any order on \(G\) is induced by an order on \(G/T\) - a notion we will make precise in \Cref{induced-orders}.
			Since this will almost always be the case in this work, it suffices for us to think of \(G\) as torsion free.
	\end{enumerate}
\end{Remark}

It is often useful and allows for more ergonomic notation to think of the positive cone instead of the order itself.
The following lemma tells us that an order is uniquely determined by its positive cone.

\begin{Lemma}\label{order-from-positive}
	Let \(G\) be a group.
	\begin{enumerate}
		\item	If \(G\) is ordered, the positive cone \(\poscone G\) is closed under multiplication and under conjugation with elements of \(G\) and if \(g \in \poscone G\) then \(\inv g \notin \poscone G\).
		\item
			For any subset \(S \subseteq G\) that is closed under multiplication and under conjugation with elements of \(G\) and such that \(S \cap \inv S = \emptyset\),
			there is a unique order on \(G\) such that \(\poscone G = S\).
	\end{enumerate}
\end{Lemma}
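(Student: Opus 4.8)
The plan is to organise everything around a single observation: for a bi-invariant order, left-invariance lets one translate any comparison to a comparison with the identity, so that \(g \ordl h\) holds if and only if \(\inv g h\) lies in the positive cone. Once this is in place, part~(2) is essentially the statement that this recipe really does produce a bi-invariant order, and uniqueness is immediate. I would therefore prove the two parts in tandem, using the same dictionary between \(\ordl\) and \(\poscone G\) throughout.

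For part~(1), I would read off each property from bi-invariance. \emph{Closure under multiplication}: given \(1 \ordl g\) and \(1 \ordl h\), right-invariance upgrades \(1 \ordl g\) to \(h \ordl gh\), and transitivity with \(1 \ordl h\) yields \(1 \ordl gh\). \emph{Closure under conjugation}: left-invariance sends \(1 \ordl g\) to \(h \ordl hg\), and right-invariance by \(\inv h\) then gives \(1 \ordl hg\inv h\). Finally, if \(g\) and \(\inv g\) were both positive, left-multiplying \(1 \ordl \inv g\) by \(g\) produces \(g \ordl 1\), which contradicts \(1 \ordl g\) by antisymmetry; hence \(g \in \poscone G\) forces \(\inv g \notin \poscone G\).

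For part~(2), given \(S\) closed under multiplication and conjugation with \(S \cap \inv S = \emptyset\), I would define \(g \ordl h \iff \inv g h \in S\) and verify the axioms. Its positive cone is \(\{g \mid \inv 1\,g = g \in S\} = S\), as required. Antireflexivity holds because \(1 \notin S\) (otherwise \(1 = \inv 1 \in S \cap \inv S\)); antisymmetry holds because \(\inv g h \in S\) makes \(\inv h g = \inv{(\inv g h)} \in \inv S\), which is disjoint from \(S\); and transitivity is exactly closure under multiplication applied to \((\inv f g)(\inv g h) = \inv f h\). Left-invariance is automatic, since \(\inv{(hf)}(hg) = \inv f g\) is the same membership condition, while right-invariance is precisely where closure under conjugation is used: \(\inv{(fh)}(gh) = \inv h(\inv f g)h\).

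Uniqueness follows from the opening observation: if \(\ordl\) is any bi-invariant order with positive cone \(S\), then left-invariance in both directions gives \(g \ordl h \iff 1 \ordl \inv g h \iff \inv g h \in S\), so the relation is completely determined by \(S\). Every computation here is a one-line manipulation, so I do not expect a genuine obstacle; the only point demanding care is the left/right asymmetry built into the definition \(g \ordl h \iff \inv g h \in S\), which makes left-invariance free but lets right-invariance consume exactly the conjugation hypothesis—so I would make sure to flag that closure under multiplication and closure under conjugation play distinct and non-interchangeable roles.
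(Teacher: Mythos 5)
Your proof is correct and follows essentially the same route as the paper: define the candidate order by \(g \ordl h \iff \inv g h \in S\), check that closure under multiplication gives transitivity, disjointness \(S \cap \inv S = \emptyset\) gives antisymmetry and antireflexivity, and closure under conjugation gives right-invariance, with uniqueness forced by \(g \ordl h \iff 1 \ordl \inv g h \iff \inv g h \in S\). The only difference is that you spell out the one-line verifications (including part (1), which the paper dismisses as immediate), which is a welcome elaboration rather than a deviation.
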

\begin{proof}\begin{enumerate}\item[]
\item
	This is immediate from the definition of a bi-invariant partial order.

\item
	The postulated order is \[
		g \ordl h \iff \inv gh \in S.
	\]
	Checking that this is indeed an order is again essentially just applying the definition.
	Transitivity follows from the assumption that \(S\) is closed under multiplication.
	For anti-symmetry, we use that \(S \cap \inv S = \emptyset\).
	In particular, \(1 \notin S\), so \(\ordl\) is anti-reflexive.
	Left invariance is straightforward and for right invariance we need \(S\) closed under conjugation.

	As for uniqueness, note that \[g \ordl h \iff 1 \ordl \inv gh \iff \inv gh \in \poscone G = S\] is necessarily true for any bi-invariant relation with \(\poscone G = S\).
\end{enumerate}\end{proof}
Our goal later on will be to translate properties of group homomorphisms into a language based on partial orders.
To see how the two concepts relate, we introduce the following notions.

\begin{Definition}\label{induced-orders}
	Let \((G, \ordl)\) be an ordered group.
	\begin{enumerate}
		\item
			Let \(Q\) be another ordered group and \(\varphi\colon G \to Q\) a group homomorphism.
			Then \(\varphi\) is called \emph{order-preserving} if for all \(g, h \in G\) we have \[
				g \ordl h \Rightarrow \varphi(g) \ordle \varphi(h).
			\]

		\item
			Let \(\ordl'\) be another order on \(G\).
			We say that \(\ordl\) is a \emph{suborder} of \(\ordl'\) if \[\poscone G \subseteq \posconex G{\ordg'}.\]

		\item
			Let \((Q, \ordl_Q)\) be an ordered group and \(\Phi\colon G \to Q\) order preserving.
			We say that \(\ordl\) \emph{is induced} by \((\Phi, \ordl_Q)\) if the following condition holds:
			For every order \(\ordl'\) on \(G\) such that \(\Phi\colon (G, \ordl') \to (Q, \ordl_Q)\) is order preserving, \(\ordl'\) is a suborder of \(\ordl\).

			We also say that \(\ordl\) is induced by \(\Phi\) or by \(\ordl_Q\) if the other object is clear from the context.

		\item
			Let \((H, \ordl_H)\) be an ordered group and \(\iota \colon H \to G\) order preserving.
			We say that \(\ordl\) \emph{is induced} by \((\iota, \ordl_H)\) if the following condition holds:
			For every order \(\ordl'\) on \(G\) such that \(\iota\colon (H, \ordl_H) \to (G, \ordl')\) is order preserving, \(\ordl\) is a suborder of \(\ordl'\).

			Again, we say that \(\ordl\) is induced by \(\iota\) or by \(\ordl_H\) if the other is clear.
			We denote the induced orders by \(\indor{\ordl_Q}\Phi\) and \(\indol{\ordl_H}\iota\) respectively or just by \(\ordl\) if there is no chance of confusion.

		\item
			If \((Q, \ordl_Q)\) is an ordered group and \(\Phi\colon G \to Q\) such that \(\ordl\) is induced by \((\Phi, \ordl_Q)\), then \(\Phi\) is \emph{order inducing on the domain}.
			An \emph{order inducing map on the codomain} \(\iota \colon H \to G\) is defined analogously.
			We will omit the (co)domain part if it is clear on which side a map is order inducing.
	\end{enumerate}
\end{Definition}

\begin{Remark}
	The map \(\iota\) is not necessarily injective.
	However, if \(\iota\) induces an order on its image, then we may also describe that order as the order induced by the inclusion \(\iota\colon H/\ker\iota \into G\).
	The projection \(H \onto H/\ker\iota\) is necessarily order preserving in this case.
	Thus \(\iota\) can be made injective without losing any information whenever \(\iota\) is order inducing.
\end{Remark}

In many cases, if \(\ordl\) is an induced order, it is possible to make its positive cone explicit.
Most of the time I find it easier to think about induced orders using the following description.

\begin{Lemma}\label{explicit-induced-order}
	Let \(G\) be a group and \(Q, H\) be ordered groups.
	Let \(\Phi\colon G \to Q\) and \(\iota\colon H \to G\).

	Then \begin{enumerate}
		\item\label{explicit-induced-order-1}
			\(\Phi\) induces the following order on \(G\): \[
				1 \indor\ordl\Phi g \iff 1 \ordl \Phi(g)
			\]
		\item\label{explicit-induced-order-2}
			Consider the relation \[
				1 \ordl' g \iff g \in G \cdot \iota(\poscone H)
			\]
			Here, \(G \cdot \iota(\poscone H)\) denotes the image of \(\iota(\poscone H)\) under conjugation with elements in \(G\).
			If \(\iota\) induces an order on \(G\), then \(\ordl'\) is an order and it is the order induced by \(\iota\).
	\end{enumerate}
\end{Lemma}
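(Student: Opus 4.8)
The plan is to treat both parts uniformly through \Cref{order-from-positive}: in each case I exhibit an explicit candidate for the positive cone, verify that it is closed under multiplication and under conjugation and that it meets its own inverse only in the empty set (so that by \Cref{order-from-positive} it is the positive cone of a unique bi-invariant order), and then check the universal property defining the induced order by comparing an arbitrary order-preserving \(\ordl'\) with the candidate. The work thus splits into a ``closure'' computation and a ``comparison'' computation in each part.

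For \Cref{explicit-induced-order-1} the candidate cone is \(S \coloneqq \inv\Phi(\poscone Q)\), and I first observe that the stated relation \(1 \indor\ordl\Phi g \iff 1 \ordl \Phi(g)\) has exactly \(S\) as its set of positive elements. That \(S\) is closed under multiplication and under conjugation, and that \(S \cap \inv S = \emptyset\), all follow immediately from the corresponding properties of \(\poscone Q\) in \Cref{order-from-positive} together with the fact that \(\Phi\) is a homomorphism; for instance \(\Phi(xg\inv x) = \Phi(x)\Phi(g)\inv{\Phi(x)}\) lies in \(\poscone Q\) whenever \(\Phi(g)\) does. Hence \(S\) determines a unique order, and by construction \(\Phi\) is order preserving for it. For the universal property I take any \(\ordl'\) making \(\Phi\colon (G,\ordl') \to (Q,\ordl_Q)\) order preserving and must show that its positive cone is contained in \(S\): if \(1 \ordl' g\) then order preservation gives \(1 \ordle \Phi(g)\), so \(\Phi(g) \in \poscone Q\) and \(g \in S\) whenever \(\Phi(g) \neq 1\). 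The one delicate point is the boundary case \(\Phi(g) = 1\), where order preservation only yields the weak inequality; this is precisely where being order inducing must be invoked, since otherwise a positive element of \(\ordl'\) could sit inside \(\ker\Phi\) and escape \(S\). I expect isolating this case, and arguing that no positive element is annihilated by \(\Phi\), to be the delicate part of this half.

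For \Cref{explicit-induced-order-2} write \(T \coloneqq G\cdot\iota(\poscone H)\) for the candidate cone. Closure under conjugation is immediate, as \(T\) is by definition a union of conjugation-translates, and the inclusion of \(T\) into the positive cone of the induced order is clear as well: \(\iota(\poscone H)\) must lie in the positive cone of any order for which \(\iota\) is order preserving, hence in the induced (smallest such) one, which is conjugation invariant. The reverse inclusion is where the hypothesis that \(\iota\) induces an order does the work. Since the induced order is the smallest order-preserving order, it suffices to show that \(T\) is itself the positive cone of an order-preserving order, for then minimality together with the inclusion above forces equality; by \Cref{order-from-positive} this reduces to checking that \(T\) is closed under multiplication and that \(T \cap \inv T = \emptyset\). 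The second condition follows from the inclusion of \(T\) into the (antisymmetric) induced cone, since any element of \(T \cap \inv T\) would put both itself and its inverse into that cone.

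The genuine difficulty, and what I expect to be the main obstacle of the whole lemma, is the multiplicative closure of \(T\) in \Cref{explicit-induced-order-2}: a product of two single conjugates of elements of \(\iota(\poscone H)\) is not visibly again a single such conjugate, so passing from the conjugation closure \(T\) to the full multiplicative-and-conjugation closure appears to enlarge the set. It is exactly the assumption that \(\iota\) is order inducing — equivalently, that the multiplicative-and-conjugation closure of \(\iota(\poscone H)\) avoids its own inverse and hence is a legitimate cone — that I will exploit to collapse that larger closure back onto \(T\). Making this collapse precise, rather than the routine closure and comparison bookkeeping, is the step that will require the most care.
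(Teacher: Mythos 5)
Both of the steps you yourself flag as ``the delicate part'' are genuine gaps, and neither can be closed in the way you propose. In part~\ref{explicit-induced-order-1} your closure computation for \(S = \inv\Phi(\poscone Q)\) agrees with the paper's, but your plan for the comparison step is circular: you want to ``invoke being order inducing'' to rule out an \(\ordl'\)-positive element inside \(\ker\Phi\), yet the assertion that \(\Phi\) induces this order is precisely the unconditional conclusion of this part, not a hypothesis available to you. Nor can the claim be proved some other way: under the paper's literal (weak) definition of order preserving it is simply false --- the projection \(\Z^2 \onto \Z\) is order preserving when \(\Z^2\) carries the lexicographic order, and \((0,1)\) is positive and lies in the kernel. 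The paper's own proof never meets this case because it reads order preservation of \(\ordl'\) strictly on positives, asserting outright that the positive cone of \(\ordl'\) is mapped by \(\Phi\) into \(\poscone Q\); with that reading the universal property is a one-line observation and there is no boundary case to isolate.

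In part~\ref{explicit-induced-order-2} you rightly identify multiplicative closure of \(T = G \cdot \iota(\poscone H)\) as the crux, but the collapse you propose --- deducing from antisymmetry of the multiplicative-and-conjugation closure of \(\iota(\poscone H)\) that this closure equals \(T\) --- is refuted by a concrete example, not merely difficult. Take \(G = F_2 = \grpg{a,b}\), \(H = \grpg{a} \cong \Z\) with its standard order, and \(\iota\) the inclusion. Every product of conjugates of positive powers of \(a\) has positive image under the homomorphism sending \(a \mapsto 1\), \(b \mapsto 0\), so the multiplicative-and-conjugation closure is antisymmetric; it is therefore a legitimate cone and \(\iota\) does induce an order, whose cone is that closure. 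But \(a \cdot (b a \inv b) = a b a \inv b\) is cyclically reduced of length four, hence conjugate to no power of \(a\), so it lies in the closure but not in \(T\): the set \(T\) of single conjugates is not closed under multiplication, and the relation it defines is not even transitive. So under the single-conjugate reading of \(G \cdot \iota(\poscone H)\) the statement itself fails; the lemma (and the paper's proof, which declares multiplicative closure of \(G \cdot \iota(\poscone H)\) ``straightforward'') only goes through if one reads \(G \cdot \iota(\poscone H)\) as the sub-semigroup generated by all conjugates --- consistent with every application in the paper, where the image of \(\iota\) is central, so that single conjugates already form that semigroup. Your instinct that something here needs proof is sound, but the bridging step you build the proof around cannot be made precise.
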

\begin{proof} \begin{enumerate}\item[]\item
	This is the order such that \(\poscone G = \inv\Phi(\poscone Q)\).
	Checking that \(\inv\Phi(\poscone Q)\) is closed under multiplication, conjugation with \(g \in G\) and does not contain two elements inverse to each other is straightforward since we know that \(\poscone Q\) has all these properties.
	The order is then well-defined by \Cref{order-from-positive}.

	As any order on \(G\) such that \(\Phi\) is order preserving requires \(\Phi(\poscone G) \subseteq \poscone Q\), all such orders are suborders of \(\indor\ordl\Phi\).

\item
	If \(\iota\) is order preserving, then every element of \(\iota(\poscone H)\) has to be positive.
	Because the positive cone of the induced order has to be closed under conjugation, this extends to \(G \cdot \iota(\poscone H)\).
	Hence if the relation \(\ordl'\) defined in the statement is actually an order, then it is a suborder of every order such that \(\iota\) is order preserving.

	Checking that \(G \cdot \iota(\poscone H)\) is closed under multiplication and conjugation is straightforward.
	It may however contain two elements inverse to each other.
	But in this case, the above argument shows that \(\iota\) cannot be order preserving for any order on \(G\) and hence \(\iota\) does not induce any order.
\end{enumerate}\end{proof}

\begin{Remark}
	In particular, any map \(\Phi\) with an ordered codomain induces an order on the domain.
	But not every map \(\iota\) with an ordered domain induces an order on the codomain.
	However, it is easy to tell if a given order on the codomain is induced by \(\iota\).
	Especially if \(\iota(\poscone H)\) is already closed under conjugation with \(G\), then this is the positive cone of the order induced by \(\iota\).
	Also, if \(G\) admits some order such that \(\poscone G \subseteq \iota(H)\), then that order is induced by the inclusion of \(H\).
\end{Remark}

We may chain order-inducing maps as expected:
\begin{Lemma}\label{inducing-maps-chain}
	Let there be three ordered groups \((G, \ordl_G),\enskip (H, \ordl_H)\) and \((K, \ordl_K)\). 
	Let \mbox{\(\Phi\colon G \to H\)} and \mbox{\(\Psi \colon H \to K\)} be maps of groups.
	\begin{enumerate}
		\item If \(\ordl_H\) is induced by \(\Psi\) and \(\ordl_G\) is induced by \(\Phi\), then \(\ordl_G\) is also induced by \(\Psi \circ \Phi\).
		\item If \(\ordl_H\) is induced by \(\Phi\) and \(\ordl_K\) is induced by \(\Psi\), then \(\ordl_K\) is also induced by \(\Psi \circ \Phi\).
	\end{enumerate}
\end{Lemma}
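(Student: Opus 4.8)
The plan is to reduce both statements to the explicit descriptions of positive cones in \Cref{explicit-induced-order}, together with the fact from \Cref{order-from-positive} that an order is completely determined by its positive cone. Under this reduction each part becomes a short computation: I compute the positive cone that $\Psi \circ \Phi$ attaches to $G$ (resp. to $K$) and check that it equals the cone of the order $\ordl_G$ (resp. $\ordl_K$) that is already given, so that the two orders coincide. Throughout I would write $P_G$, $P_H$, $P_K$ for the positive cones of $\ordl_G$, $\ordl_H$, $\ordl_K$. The one genuinely structural input I would record first is that a composite of order-preserving maps is again order-preserving: from $g \ordl_G h$ we get $\Phi(g) \ordle_H \Phi(h)$, and splitting into the cases $\Phi(g) \ordl_H \Phi(h)$ and $\Phi(g) = \Phi(h)$ (this is where it matters that $\ordle$ allows collapse to equality) yields $\Psi\Phi(g) \ordle_K \Psi\Phi(h)$. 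This guarantees that $\Psi \circ \Phi$ is order preserving in both setups, which is part of what it means for it to induce an order.

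For part (1) both orders are induced on the domain, so by the first part of \Cref{explicit-induced-order} the hypotheses read $P_G = \inv\Phi(P_H)$ and $P_H = \inv\Psi(P_K)$, while the order induced on $G$ by $\Psi \circ \Phi$ has positive cone $\inv{(\Psi\circ\Phi)}(P_K)$. Since $\inv{(\Psi\circ\Phi)}(P_K) = \inv\Phi(\inv\Psi(P_K)) = \inv\Phi(P_H) = P_G$, the induced order and $\ordl_G$ share a positive cone and hence coincide by \Cref{order-from-positive}. This part is nothing more than functoriality of taking preimages.

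For part (2) both orders are induced on the codomain, so by the second part of \Cref{explicit-induced-order} the hypotheses read $P_H = H \cdot \Phi(P_G)$ and $P_K = K \cdot \Psi(P_H)$, where $X \cdot S$ denotes the closure of $S$ under multiplication and under conjugation by elements of $X$; the candidate cone for $\Psi \circ \Phi$ is $K \cdot \Psi(\Phi(P_G))$. The inclusion $K \cdot \Psi(\Phi(P_G)) \subseteq P_K$ is immediate from $\Phi(P_G) \subseteq P_H$ and monotonicity of the closure. For the reverse inclusion I would substitute $P_H = H \cdot \Phi(P_G)$ into $P_K = K \cdot \Psi(P_H)$ and use that $\Psi$ is a homomorphism: it sends an $H$-conjugate $h x \inv h$ of $x \in \Phi(P_G)$ to the $K$-conjugate $\Psi(h)\Psi(x)\inv{\Psi(h)}$ of $\Psi(x) \in \Psi(\Phi(P_G))$, and sends products to products, so $\Psi(H \cdot \Phi(P_G)) \subseteq K \cdot \Psi(\Phi(P_G))$; closing up under $K$ and using idempotence of the closure gives $P_K \subseteq K \cdot \Psi(\Phi(P_G))$. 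Thus the two cones agree, and since their common value $P_K$ is a genuine positive cone the induced order exists and equals $\ordl_K$ by \Cref{explicit-induced-order} and \Cref{order-from-positive}.

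The main obstacle I anticipate is purely bookkeeping rather than conceptual, and it lives entirely in part (2): one must check that the double closure $K \cdot \Psi(H \cdot \Phi(P_G))$ collapses to the single closure $K \cdot \Psi(\Phi(P_G))$, i.e. that the normal-submonoid closure is functorial along the homomorphism $\Psi$. This is exactly the step verified above by pushing conjugates and products through $\Psi$. Part (1), by contrast, is immediate once the preimage identity $\inv{(\Psi\circ\Phi)} = \inv\Phi \circ \inv\Psi$ is invoked, and I would expect it to occupy no more than a sentence.
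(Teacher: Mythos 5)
Your proof is correct and takes essentially the same route as the paper: both parts are reduced to \Cref{explicit-induced-order}, and for part (1) the paper gives exactly your preimage computation \(\inv{(\Psi\circ\Phi)}(P_K) = \inv\Phi(\inv\Psi(P_K)) = P_G\). For part (2) the paper says only that it ``may be proven similarly,'' and your closure computation \(K \cdot \Psi(H \cdot \Phi(P_G)) = K \cdot \Psi(\Phi(P_G))\) is precisely that omitted argument spelled out, including the key point that \(\Psi\) carries \(H\)-conjugates and products to \(K\)-conjugates and products.
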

\begin{proof}
	For the first part, from \Cref{explicit-induced-order} we know that \[
		1 \ordl_G g \iff 1 \ordl_H \Phi(G) \iff 1 \ordl_K (\Psi \circ \Phi)(g)
	\] and hence \(\ordl_G = \indor{\ordl_K}{(\Psi \circ \Phi)}\).
	The second part may be proven similarly.
\end{proof}

The kernel of a map will be of particular interest to us later on.
Given a map that is order inducing on the domain, we would like to know if we can recover the kernel of that map just by looking at the induced order.
This is not always possible.
For example in the case where both the inducing and the induced order are trivial.
But in most other cases, we get some restrictions on what the kernel might have been.

\begin{Lemma}\label{ker-max-ac}
	Let \(G\) be a finitely generated group,
	\(Q\) an ordered group,
	\(\Phi\colon G \onto Q\) onto
	and let \(G\) carry the order induced by \(\Phi\).
	Then: \begin{enumerate}
		\item\label{ker-max-ac-1} If \(K \subseteq G\) is an antichain, then so is \(\Phi(K)\). If \(K\) is maximal then \(\Phi(K)\) is also maximal.
		\item\label{ker-max-ac-2} If \(P \subseteq Q\) is an antichain, then so is \(\inv\Phi(P)\). If \(P\) is maximal then \(\inv\Phi(P)\) is also maximal.
		\item For any maximal antichain \(K \subseteq G\) containing \(1\) we have \(\ker\Phi \subseteq K\).
		\item If \(Q\) is totally ordered, \(\ker\Phi\) is an antichain and every other antichain that contains \(1\) is contained in \(\ker\Phi\).
			In particular, \(\ker\Phi\) is the only maximal antichain subgroup.
			It is maximal even among all antichains.
	\end{enumerate}
\end{Lemma}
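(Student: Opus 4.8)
The plan is to reduce the whole lemma to a single biconditional comparing the orders on \(G\) and \(Q\), after which each part is essentially bookkeeping together with surjectivity of \(\Phi\). Since \(G\) carries the order induced by \(\Phi\), \Cref{explicit-induced-order} gives \(\poscone G = \inv\Phi(\poscone Q)\), that is, \(1 \ordl g\) holds exactly when \(1 \ordl_Q \Phi(g)\). Combining this with left invariance and the homomorphism property \(\Phi(\inv g h) = \inv{\Phi(g)}\Phi(h)\), I get
\begin{equation}
g \ordl h \iff \inv g h \in \poscone G \iff \Phi(\inv g h) \in \poscone Q \iff \Phi(g) \ordl_Q \Phi(h). \tag{$\star$}
\end{equation}
So \(\Phi\) does not merely preserve comparability but reflects it; this symmetric statement is what makes the translation between antichains in \(G\) and in \(Q\) possible in both directions.

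For parts (1) and (2) the antichain halves are immediate from \((\star)\): a comparable pair among images (resp.\ preimages) would pull back (resp.\ push forward) to a comparable pair inside the antichain we started with. For the maximality halves I would argue by contradiction and use surjectivity to lift a hypothetical enlarging element. If \(\Phi(K)\) sat inside a strictly larger antichain \(P \subseteq Q\), I pick \(q \in P \setminus \Phi(K)\) and a preimage \(g\) with \(\Phi(g) = q\); then \(g \notin K\), and \((\star)\) shows \(g\) is incomparable to every \(k \in K\) because \(q = \Phi(g)\) and \(\Phi(k)\) both lie in the antichain \(P\). Thus \(K \cup \{g\}\) is a strictly larger antichain, contradicting maximality of \(K\). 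Part (2) is the mirror argument: given an antichain \(K \supsetneq \inv\Phi(P)\) I take \(g \in K \setminus \inv\Phi(P)\), note \(\Phi(g) \notin P\), and check that every \(p \in P\) has a preimage lying in \(K\), so that \((\star)\) makes \(\Phi(g)\) incomparable to all of \(P\).

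Part (3) is the key case for later use. Let \(K\) be a maximal antichain with \(1 \in K\) and let \(g \in \ker\Phi\), so \(\Phi(g) = \Phi(1)\). Since \(1\) is incomparable to every \(k \in K\), \((\star)\) shows the identity of \(Q\) is incomparable to every \(\Phi(k)\); applying \((\star)\) to \(g\), comparability of \(g\) and \(k\) is governed by exactly the same comparison in \(Q\), so \(g\) too is incomparable to every \(k \in K\). Maximality then forces \(g \in K\), giving \(\ker\Phi \subseteq K\). For part (4), if \(Q\) is totally ordered its only antichains are singletons, so \(\{\Phi(1)\}\) is a maximal antichain and part (2) makes \(\ker\Phi = \inv\Phi(\{\Phi(1)\})\) a maximal antichain; being a subgroup, it is a maximal antichain subgroup. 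Any antichain \(K\) containing \(1\) has each \(k\) incomparable to \(1\), so by \((\star)\) each \(\Phi(k)\) is incomparable to \(\Phi(1)\) and hence, by totality of \(Q\), equal to it; thus \(k \in \ker\Phi\) for every \(k \in K\). Therefore \(\ker\Phi\) swallows every antichain through \(1\), and since any maximal antichain subgroup contains \(1\), it is the unique one.

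The only step that genuinely requires care is \((\star)\), and this is where I expect the conceptual content to sit: it is essential that the induced order \emph{reflects} comparability rather than only preserving it, which is exactly what fails for an arbitrary order making \(\Phi\) order preserving and what the explicit positive cone \(\poscone G = \inv\Phi(\poscone Q)\) buys us. Everything else is formal, with surjectivity entering only to supply the witnesses lifted in the maximality arguments of (1) and (2).
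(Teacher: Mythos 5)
Your proof is correct and rests on exactly the same idea as the paper's: the induced order satisfies \(g \ordl h \iff \Phi(g) \ordl_Q \Phi(h)\) (your \((\star)\), which the paper invokes in its first line of part (1)), combined with surjectivity to lift incomparable witnesses. The only differences are organizational — you prove the maximality half of (2), part (3), and part (4) by direct incomparability arguments where the paper chains them through parts (1) and (2) — but the mathematical content is identical.
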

\begin{proof}\begin{enumerate}\item[]
\item
	As \(\Phi\) is order preserving, \(\Phi(k) \ordl \Phi(k')\) implies \(k \ordl k'\), so \(\Phi(K)\) is an antichain.

	Now suppose there is a \(q \in Q \setminus \Phi(Q)\) such that \(q\) is incomparable to every \(k \in \Phi(K)\).
	Then any preimage \(q_0\) of \(q\) is incomparable to any element of \(K\).
	This contradicts the maximality of \(K\), so \(\Phi(K)\) is also maximal.

\item
	Suppose there are comparable \(g, h \in \inv\Phi(P)\).
	Then \(\Phi(g), \Phi(h) \in P\) are also comparable but \(P\) is an antichain.
	Hence \(\inv\Phi(P)\) must be an antichain.

	Suppose there is a larger antichain \(\inv\Phi(P) \subsetneq H \subseteq G\).
	Then \(P \subsetneq \Phi(H)\) also is an antichain by \ref{ker-max-ac-1}.
	So \(P\) cannot be maximal.

\item
	By \ref{ker-max-ac-1}, \(\Phi(K)\) is a maximal antichain in \(Q\) that contains \(1\).
	\ref{ker-max-ac-2} tells us that then \(K = \inv\Phi(\Phi(K)) \supseteq \inv\Phi(1) = \ker\Phi\).

\item
	If \(Q\) is totally ordered, the only maximal antichain containing \(1\) is \(\{1\}\).
	By \ref{ker-max-ac-2}, \(\ker\Phi\) is a maximal antichain and by \ref{ker-max-ac-1} there cannot be any other that contains \(1\).
	Hence \(\ker\Phi\) contains all other antichains that contain \(1\).
\end{enumerate}\end{proof}

Let us have a look at an example class of orders that will show up multiple times in this article.
\begin{Definition}\label{lexicographic-order}
	Let \[0 \to H \overset\iota\into G \overset\pi\onto Q \to 0\] be an exact sequence of groups and
	suppose that \(H\) and \(Q\) are ordered by \(\ordl_H\) and \(\ordl_Q\) respectively.
	Further assume that \(\iota\) induces an order on \(G\).

	Then the \emph{lexicographic order} with respect to this sequence is the order \(\ordl_G\) such that \[
		1 \ordl_G g \iff g \in \inv\pi(\poscone Q) \lor g \in \iota(\poscone H) 
	\]
\end{Definition}

\begin{Remark}
	For a lexicographic order with respect to some sequence \(H \into G \onto Q\), note that any two given elements either have different images in \(Q\) or they lie in the same \(H\)-coset.
	Hence in order to compare those elements, we first try comparing their images in \(Q\).
	If they have the same image, we instead compare them using the order induced by \(H\).
	In case \(G\) is a semidirect product \(G = H \ltimes Q\), this means comparing the \(Q\) factor and then the \(H\) factor.

	The lexicographic order has both \(\indor{\ordl_Q}\pi\) and \(\indol{\ordl_H}\iota\) as suborders.
	In fact, it is the smallest such order.
	So one might think about it as ``induced by \(\pi\) and \(\iota\) together''.
\end{Remark}

\begin{figure}
	\centering
	\begin{subfigure}{.45\textwidth}
		\centering
		\begin{tikzpicture}[shorten >=1pt,on grid,auto]

	\node[] (00) {};
	\node[] (01) [right=4 of 00] {};
	\node[] (10) [above=1 of 00] {};
	\node[] (11) [right=4 of 10] {};
	\node[] (20) [above=1 of 10] {};
	\node[] (21) [right=4 of 20] {};

	\node[] (v00) [below right=1 and 1 of 00] {};
	\node[] (v01) [right=1 of 10] {};
	\node[] (v02) [above=4 of v00] {};
	\node[] (v10) [right=1 of v00] {};
	\node[] (v11) [right=1 of v01] {};
	\node[] (v12) [right=1 of v02] {};
	\node[] (v20) [right=1 of v10] {};
	\node[] (v21) [right=1 of v11] {};
	\node[] (v22) [right=1 of v12] {};
	\path[-]
		(00.center) edge [gray] (01.center)
		(10.center) edge [gray] (11.center)
		(20.center) edge [gray] (21.center)
		(v00.center) edge [gray] (v01.center)
		(v01.center) edge [gray] (v02.center)
		(v10.center) edge [gray] (v11.center)
		(v11.center) edge [gray] (v12.center)
		(v20.center) edge [gray] (v21.center)
		(v21.center) edge [gray] (v22.center)
	;

	\node[] (l0) [left=.4 of v00] {};
	\node[] (l1) [right=.4 of v22] {};

	\path[-]
		(l0.center) edge (l1.center)
	;

	\node[] (p00) [right=1 of 00] {};
	\node[] (p01) [right=1 of p00] {};
	\node[] (p02) [right=1 of p01] {};
	\node[] (p10) [above=1 of p00] {};
	\node[] (p12) [right=2 of p10] {};
	\node[] (p20) [above=1 of p10] {};
	\node[] (p21) [right=1 of p20] {};
	\node[] (p22) [right=1 of p21] {};

	\draw[red]  (p00.center) -- ($(l0.center)!(p00.center)!(l1.center)$);
	\draw[blue] (p01.center) -- ($(l0.center)!(p01.center)!(l1.center)$);
	\draw[blue] (p02.center) -- ($(l0.center)!(p02.center)!(l1.center)$);
	\draw[red]  (p10.center) -- ($(l0.center)!(p10.center)!(l1.center)$);
	\draw[blue] (p12.center) -- ($(l0.center)!(p12.center)!(l1.center)$);
	\draw[red]  (p20.center) -- ($(l0.center)!(p20.center)!(l1.center)$);
	\draw[red]  (p21.center) -- ($(l0.center)!(p21.center)!(l1.center)$);
	\draw[blue] (p22.center) -- ($(l0.center)!(p22.center)!(l1.center)$);
\end{tikzpicture}
		\caption{total, full, archimedean}
		\label{ord-z2-total}
	\end{subfigure}
	\begin{subfigure}{.45\textwidth}
		\centering
		\begin{tikzpicture}[shorten >=1pt,on grid,auto]

	\node[] (00) {};
	\node[] (01) [right=4 of 00] {};
	\node[] (10) [above=1 of 00] {};
	\node[] (11) [right=4 of 10] {};
	\node[] (20) [above=1 of 10] {};
	\node[] (21) [right=4 of 20] {};

	\node[] (v00) [below right=1 and 1 of 00] {};
	\node[] (v01) [right=1 of 10] {};
	\node[] (v02) [above=4 of v00] {};
	\node[] (v10) [right=1 of v00] {};
	\node[] (v11) [right=1 of v01] {};
	\node[] (v12) [right=1 of v02] {};
	\node[] (v20) [right=1 of v10] {};
	\node[] (v21) [right=1 of v11] {};
	\node[] (v22) [right=1 of v12] {};
	\path[-]
		(00.center) edge [gray] (01.center)
		(10.center) edge [gray] (11.center)
		(20.center) edge [gray] (21.center)
		(v00.center) edge [gray] (v01.center)
		(v01.center) edge [gray] (v02.center)
		(v10.center) edge [gray] (v11.center)
		(v11.center) edge [gray] (v12.center)
		(v20.center) edge [gray] (v21.center)
		(v21.center) edge [gray] (v22.center)
	;

	\node[] (l0) [left=1 of v00] {};
	\node[] (l1) [right=1 of v22] {};

	\path[-]
		(l0.center) edge (l1.center)
	;

	\node[] (p00) [right=1 of 00] {};
	\node[] (p01) [right=1 of p00] {};
	\node[] (p02) [right=1 of p01] {};
	\node[] (p10) [above=1 of p00] {};
	\node[] (p12) [right=2 of p10] {};
	\node[] (p20) [above=1 of p10] {};
	\node[] (p21) [right=1 of p20] {};
	\node[] (p22) [right=1 of p21] {};

	\draw[blue] (p01.center) -- ($(l0.center)!(p01.center)!(l1.center)$);
	\draw[blue] (p02.center) -- ($(l0.center)!(p02.center)!(l1.center)$);
	\draw[red]  (p10.center) -- ($(l0.center)!(p10.center)!(l1.center)$);
	\draw[blue] (p12.center) -- ($(l0.center)!(p12.center)!(l1.center)$);
	\draw[red]  (p20.center) -- ($(l0.center)!(p20.center)!(l1.center)$);
	\draw[red]  (p21.center) -- ($(l0.center)!(p21.center)!(l1.center)$);
\end{tikzpicture}
		\caption{non-total, full, archimedean}
		\label{ord-z2-onto-z}
	\end{subfigure}
	\begin{subfigure}{.45\textwidth}
		\centering
		\begin{tikzpicture}[shorten >=1pt,on grid,auto]

	\node[] (00) {};
	\node[] (01) [right=4 of 00] {};
	\node[] (10) [above=1 of 00] {};
	\node[] (11) [right=4 of 10] {};
	\node[] (20) [above=1 of 10] {};
	\node[] (21) [right=4 of 20] {};

	\node[] (v00) [below right=1 and 1 of 00] {};
	\node[] (v01) [right=1 of 10] {};
	\node[] (v02) [above=4 of v00] {};
	\node[] (v10) [right=1 of v00] {};
	\node[] (v11) [right=1 of v01] {};
	\node[] (v12) [right=1 of v02] {};
	\node[] (v20) [right=1 of v10] {};
	\node[] (v21) [right=1 of v11] {};
	\node[] (v22) [right=1 of v12] {};
	\path[-]
		(00.center) edge [gray] (01.center)
		(10.center) edge [gray] (11.center)
		(20.center) edge [gray] (21.center)
		(v00.center) edge [gray] (v01.center)
		(v01.center) edge [gray] (v02.center)
		(v10.center) edge [gray] (v11.center)
		(v11.center) edge [gray] (v12.center)
		(v20.center) edge [gray] (v21.center)
		(v21.center) edge [gray] (v22.center)
	;

	\node[] (l0) [left=1 of v00] {};
	\node[] (l1) [right=1 of v22] {};

	\node[] (p00) [right=1 of 00] {};
	\node[] (p01) [right=1 of p00] {};
	\node[] (p02) [right=1 of p01] {};
	\node[] (p10) [above=1 of p00] {};
	\node[] (p11) [right=1 of p10] {};
	\node[] (p12) [right=1 of p11] {};
	\node[] (p20) [above=1 of p10] {};
	\node[] (p21) [right=1 of p20] {};
	\node[] (p22) [right=1 of p21] {};

	\path[-]
		(p00.center) edge [blue] (p11.center)
		(l0.center)  edge [blue] (p00.center)
		(p11.center) edge [red]  (p22.center)
		(p22.center) edge [red]  (l1.center)
	;
\end{tikzpicture}
		\caption{non-total, non-full, archimedean}
		\label{ord-z2-from-z}
	\end{subfigure}
	\begin{subfigure}{.45\textwidth}
		\centering
		\begin{tikzpicture}[shorten >=1pt,on grid,auto]

	\node[] (00) {};
	\node[] (01) [right=4 of 00] {};
	\node[] (10) [above=1 of 00] {};
	\node[] (11) [right=4 of 10] {};
	\node[] (20) [above=1 of 10] {};
	\node[] (21) [right=4 of 20] {};

	\node[] (v00) [below right=1 and 1 of 00] {};
	\node[] (v01) [right=1 of 10] {};
	\node[] (v02) [above=4 of v00] {};
	\node[] (v10) [right=1 of v00] {};
	\node[] (v11) [right=1 of v01] {};
	\node[] (v12) [right=1 of v02] {};
	\node[] (v20) [right=1 of v10] {};
	\node[] (v21) [right=1 of v11] {};
	\node[] (v22) [right=1 of v12] {};
	\path[-]
		(00.center) edge [gray] (01.center)
		(10.center) edge [gray] (11.center)
		(20.center) edge [gray] (21.center)
		(v00.center) edge [gray] (v01.center)
		(v01.center) edge [gray] (v02.center)
		(v10.center) edge [gray] (v11.center)
		(v11.center) edge [gray] (v12.center)
		(v20.center) edge [gray] (v21.center)
		(v21.center) edge [gray] (v22.center)
	;

	\node[] (l0) [left=1 of v00] {};
	\node[] (l1) [right=1 of v22] {};

	\node[] (p00) [right=1 of 00] {};
	\node[] (p01) [right=1 of p00] {};
	\node[] (p02) [right=1 of p01] {};
	\node[] (p10) [above=1 of p00] {};
	\node[] (p11) [right=1 of p10] {};
	\node[] (p12) [right=1 of p11] {};
	\node[] (p20) [above=1 of p10] {};
	\node[] (p21) [right=1 of p20] {};
	\node[] (p22) [right=1 of p21] {};

	\draw[blue] (p01.center) -- ($(l0.center)!(p01.center)!(l1.center)$);
	\draw[blue] (p02.center) -- ($(l0.center)!(p02.center)!(l1.center)$);
	\draw[red]  (p10.center) -- ($(l0.center)!(p10.center)!(l1.center)$);
	\draw[blue] (p12.center) -- ($(l0.center)!(p12.center)!(l1.center)$);
	\draw[red]  (p20.center) -- ($(l0.center)!(p20.center)!(l1.center)$);
	\draw[red]  (p21.center) -- ($(l0.center)!(p21.center)!(l1.center)$);
	\path[-]
		(p00.center) edge [blue] (p11.center)
		(l0.center)  edge [blue] (p00.center)
		(p11.center) edge [red]  (p22.center)
		(p22.center) edge [red]  (l1.center)
	;
\end{tikzpicture}
		\caption{total, full, non-archimedean}
		\label{ord-z2-lex}
	\end{subfigure}
	\caption{Orders on \(\Z^2\)}
	\label{ord-z2}
\end{figure}

With these tools at hand we can construct a multitude of orders.
In \Cref{ord-z2} we see some orders on \(\Z^2\).

\subref{ord-z2-total} is an order induced by an injective map to \(\R\).
Such a map corresponds to a line at an irrational slope.
Elements are ordered by their oriented distance to that line.
This distance is unique for every element so the order is total.

Similarly for \subref{ord-z2-onto-z}.
But here, the inducing map has image isomorphic to \(\Z\), so the line is at a rational slope.
Points on the line are elements of the kernel of the inducing map.
Distances are no longer unique. For example all points on the line have distance \(0\) from it.
Two elements at the same distance are incomparable.

\subref{ord-z2-from-z} is an order that is induced by the inclusion of \(\Z\) into \(\Z^2\).
All elements comparable to \(1\) lie in some copy of \(\Z\) embedded into \(\Z^2\).
In the picture, this is the colored diagonal line.
Comparability divides \(\Z^2\) into equivalence classes that correspond to parallels of the colored lines.

Finally, \subref{ord-z2-lex} is a lexicographic order corresponding to the sequence \(\Z \into \Z^2 \onto \Z\) where both factors are non-trivially ordered.
Note that this picture may be obtained by overlaying \subref{ord-z2-onto-z} with \subref{ord-z2-from-z}.
This corresponds to the fact that we may use either copy of \(\Z\) in the above sequence to compare elements in the lexicographic order.

In \Cref{example-abelian-orders} we will see that every non-trivial order on \(\Z^2\) falls into exactly one of these four categories.

However, our goal is to translate a statement about characters into a statement about orders.
Note that only \subref{ord-z2-total} and \subref{ord-z2-onto-z} are induced by maps to \(\R\).
Thus we need some way to recognise orders that are induced by characters.
That is, we need to distinguish these orders from \subref{ord-z2-from-z} and \subref{ord-z2-lex}.
To this end, the following definitions turn out to be useful.

\begin{Definition}
	Let \(G\) be an ordered group and \(g, h \in G\).
	We say that \(g\) is \emph{infinitesimal with respect to \(h\)}, if \(g^i \ordl h\) for all \(i \in \Z\).
	In this case, we write \(g \ordll h\).

	An order is called \emph{archimedean} if it does not admit any positive infinitesimal elements.
\end{Definition}

\begin{Remark}\label{total-archimedean-in-r}\begin{enumerate}\item[]
\item
	The way we worded the definition, \(1\) is infinitesimal with respect to any positive element.

\item
	If \(\ordl_G\) is a lexicographic order with respect to some orders \(\ordl_Q\) and \(\ordl_H\), then \(\ordl_G\) is archimedean if and only if at least one of \(\ordl_Q\) and \(\ordl_H\) is trivial and the other one is archimedean.
	In particular, an order induced by another order is archimedean if and only if the inducing order is.

\item
	It is a fact due to Hölder \cite{Holder} that every totally ordered archimedean group is a subgroup of \(\R\).
	An account in English may be found for example in \cite{Kopytov}.
	For partially ordered groups, this is not the case as we will see in \Cref{example-nilpotent-orders}.
\end{enumerate}\end{Remark}

\begin{Definition}
	Let \((G, \ordl)\) be an ordered group. \begin{enumerate}
		\item The order \(\ordl\) is called \emph{primitive}, if for every \(g, h \in G\) and every \(n \in \N\) we have \(g^n \ordl h^n \Rightarrow g \ordl h\).
		\item \(\ordl\) is called \emph{factorizing} if for any antichain normal subgroup \(H \normsub G\), \(\ordl\) is induced by the projection \(G \onto G/H\) for some order on \(G/H\).
		\item \(\ordl\) is called \emph{full} if it has both above properties.
	\end{enumerate}
\end{Definition}

\begin{Example}\label{example-abelian-orders}\begin{enumerate}\item[]
\item
	On \(\Z = \grpg X\), there are exactly two non-trivial full orders:
	The standard order where \(X \ordg 1\) and the opposite order, where we interchange the meanings of ``positive'' and ``negative''.
	That is the unique order where \(X \ordl 1\).
	The non-trivial isomorphism \((\Z, \ordl) \to (\Z, \ordg)\) is order inducing in either direction.

	If we don't require our order to be full, any submonoid that does not contain any two elements inverse to each other defines an order on \(\Z\).
	This includes for example the monoids \(2\N \subseteq \Z\) and \(\{1, X^k \mid k \geq 2\} \subseteq \Z\).

\item
	On \(\Z^n\), any primitive order is total or lexicographic.
	Thus we may construct all primitive orders on a finitely generated abelian group by decomposing it into totally or trivially ordered factors.
	Then we order their product stepwise by the lexicographic order of these factors.

	The resulting order is archimedean if at most one factor was non-trivially ordered.
	It is factorizing and hence full if at every step of constructing a lexicographic order, if the left factor is trivially ordered, then so is the right factor.

\item
	To make the previous example more concrete, consider the case \(n=2\) and recall \Cref{ord-z2}.
	When writing \(\Z^2\) as a product of abelian groups, we can either use just one factor \(\Z^2\) and order that totally.
	In this case we obtain an order like \subref{ord-z2-total}.
	Or we decompose it into two factors \(A \into \Z^2 \onto B\), where both \(A\) and \(B\) are isomorphic to \(\Z\).
	Then we can order \(A\) trivially and \(B\) totally as in \subref{ord-z2-onto-z},
	the other way round as in \subref{ord-z2-from-z},
	or if we order both factors totally we get \subref{ord-z2-lex}.

	Note that an order on \(\Z^2\) is induced by a map to \(\R\) if and only if it is full and archimedean.
	Also note that totality is not a useful criterion in this situation and that is why we consider partial orders in the first place.

	The fact that every order is full or archimedean is due to \(n=2\) being too small.
	\(\Z^3\) admits an order that is primitive but neither full nor archimedean.
	Namely the lexicographic order with respect to \(\Z \into \Z^3 \onto \Z^2\), where \(\Z\) is trivially ordered and \(\Z^2\) carries a total non-archimedean order.
	The same order can be realized as a lexicographic order with respect to \(\Z^2 \into \Z^3 \onto \Z\).
\end{enumerate}\end{Example}

\begin{Remark}
	If \(\ordl_G\) is induced by \(G \onto (Q, \ordl_Q)\) and \(\ordl_Q\) is full, then so is \(\ordl_G\).
	But if \(\ordl_G\) is induced by \((H, \ordl_H) \into G\) and \(\ordl_H\) is full, then \(\ordl_G\) need not be full.
	For example if \(2\Z\) is taken to be ordered by the standard order, the order on \(\Z\) induced by \(2\Z \into \Z\) is not full.
\end{Remark}

To finish the example, let us classify full archimedean orders on finitely generated abelian groups.
\begin{Lemma}\label{characterisation-abelian-orders}
	Let \((G, \ordl)\) be a finitely generated abelian group and \(\ordl\) a full archimedean order.
	Then \begin{enumerate}
		\item \(G\) contains a unique maximal antichain subgroup \(H\),
		\item \(G/H\) is totally ordered,
		\item and \(\ordl\) is induced by a map \(\Phi\colon G \to \R\), where \(\R\) carries the standard order.
	\end{enumerate}
\end{Lemma}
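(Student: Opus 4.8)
The plan is to realise \(\ordl\) as the pullback of a \emph{total} order on a quotient of \(G\) and then invoke Hölder's theorem. Write \(P = \poscone G\); by \Cref{order-from-positive} the order is determined by \(P\), and since \(G\) is abelian every subgroup is normal and closedness of \(P\) under conjugation is automatic. Recall that fullness means \(\ordl\) is both primitive and factorizing. First I would fix \(H\) to be a maximal antichain subgroup. By \Cref{torsion-is-antichain} the antichain subgroups are exactly the subgroups disjoint from \(P\), and this property is preserved under unions of chains, so a maximal such subgroup exists by Zorn's lemma. Because \(H\) is an antichain (normal) subgroup, factorizing yields an order \(\ordl_{G/H}\) on \(G/H\) for which \(\ordl\) is the induced order \(\indor{\ordl_{G/H}}{\pi}\) along the projection \(\pi\colon G \onto G/H\). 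By \Cref{explicit-induced-order} this means precisely \(P = \inv\pi(\poscone{(G/H)})\), equivalently \(g \ordl h \iff \pi(g) \ordl_{G/H} \pi(h)\).

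The heart of the argument — and the step I expect to be the main obstacle — is to show that \(G/H\) is totally ordered; this is exactly where maximality of \(H\) and primitivity are used together. Suppose for contradiction that some \(\bar g \in G/H\) with \(\bar g \neq 1\) is incomparable to \(1\). I claim \(\langle \bar g \rangle\) is an antichain in \(G/H\). Pick \(g \in G\) with \(\pi(g) = \bar g\); if \(\bar g^m \ordg 1\) for some \(m \geq 1\), then \(g^m \in \inv\pi(\poscone{(G/H)}) = P\), so \(1 \ordl g^m\), and primitivity of \(\ordl\) gives \(1 \ordl g\), whence \(\bar g = \pi(g) \ordg 1\), a contradiction; the case \(m \leq -1\) is symmetric after inverting. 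Hence every nontrivial power of \(\bar g\) is incomparable to \(1\), so \(\langle \bar g \rangle \cap \poscone{(G/H)} = \emptyset\) and \(\langle \bar g \rangle\) is an antichain subgroup. Its preimage \(\inv\pi(\langle \bar g \rangle)\) is then a subgroup of \(G\) that strictly contains \(H\) and is still disjoint from \(P = \inv\pi(\poscone{(G/H)})\), i.e.\ a strictly larger antichain subgroup — contradicting maximality of \(H\). Therefore \(G/H\) is totally ordered, which is part~(2).

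With totality in hand, the remaining parts are quick. The induced order \(\ordl_{G/H}\) is archimedean because \(\ordl\) is and an order induced by another order is archimedean exactly when the inducing one is (\Cref{total-archimedean-in-r}). A totally ordered archimedean group embeds into \(\R\) by Hölder's theorem (\Cref{total-archimedean-in-r}), so there is an injective, order-preserving \(\iota\colon G/H \into \R\) realising \(\ordl_{G/H}\) as the restriction of the standard order, i.e.\ \(\ordl_{G/H} = \indor{<}{\iota}\). Since \(\ordl\) is induced by \(\pi\) and \(\ordl_{G/H}\) is induced by \(\iota\), \Cref{inducing-maps-chain} shows that \(\ordl\) is induced by \(\Phi \coloneqq \iota \circ \pi \colon G \to \R\), giving part~(3). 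Finally, for part~(1) I would apply the last part of \Cref{ker-max-ac} to the surjection \(\pi\colon G \onto G/H\) onto the totally ordered group \(G/H\): its kernel \(H\) is an antichain that contains every antichain containing \(1\), so \(H\) is the unique maximal antichain subgroup (indeed maximal among all antichains). This also confirms that the \(H\) chosen at the outset is canonical rather than merely some maximal choice.

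The only genuinely delicate point is the totality step, where one must turn an incomparable element of the quotient into a strictly larger antichain subgroup; the key observation making this work is that primitivity lets me transfer comparability of a power \(\bar g^m\) back to \(\bar g\) itself via a lift, so that the whole cyclic subgroup \(\langle \bar g \rangle\) stays an antichain. Everything else is an assembly of \Cref{explicit-induced-order,inducing-maps-chain,ker-max-ac} together with Hölder's theorem, and notably uses finite generation only through \Cref{ker-max-ac}.
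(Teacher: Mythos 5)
Your proposal is correct, but it is organized quite differently from the paper's proof. The paper first splits off the torsion part using the structure of finitely generated abelian groups, reduces to \(G = \Z^n\), and then performs an induction on the rank: whenever the order is not total, it picks an element incomparable to \(1\), observes that it generates a cyclic antichain subgroup \(C\), quotients by \(C\) (using fullness), and pulls back the unique maximal antichain subgroup of the quotient via parts \ref{ker-max-ac-1} and \ref{ker-max-ac-2} of \Cref{ker-max-ac}. In that scheme, uniqueness of \(H\) (part 1) comes first and totality of \(G/H\) (part 2) is deduced from uniqueness. You instead obtain some maximal antichain subgroup \(H\) by Zorn's lemma, prove totality of \(G/H\) directly — an incomparable element of the quotient would generate an antichain cyclic subgroup whose preimage strictly enlarges \(H\) — and only afterwards deduce uniqueness from the last part of \Cref{ker-max-ac}. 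Note that both arguments hinge on exactly the same key use of primitivity (a lift of a positive power is positive, hence the lifted element itself is positive), so the mathematical core is shared; what differs is the global structure. Your route buys generality and economy: it never invokes the decomposition \(G = F \oplus T\) or induction on rank, and, as you observe, finite generation enters only through the citation of \Cref{ker-max-ac} (whose relevant conclusion does not really need it), so your argument works essentially verbatim for arbitrary abelian groups. The paper's induction, on the other hand, is more constructive — it exhibits \(H\) as an iterated preimage of cyclic antichain subgroups — and it foreshadows the analogous rank-induction used later for nilpotent groups in \Cref{characterisation-nilpotent-orders}, which is presumably why the author chose it.
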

\begin{proof}\begin{enumerate}\item[]
\item
	We may write \(G = F \oplus T\) where \(F\) is a free-abelian group and \(T\) is the torsion part.
	As \(T\) is an antichain by \Cref{torsion-is-antichain}, and \(\ordl\) is full, \(\ordl\) is induced by the projection \(G \onto F\).
	Thus we may assume without loss of generality that \(G = \Z^n\) for some \(n \in \N_0\).

	We proceed by induction on \(n\).
	For \(n=0\), the only subgroup is the trivial group and it is indeed an antichain.

	Otherwise, if \(G\) is totally ordered, the trivial group is the only antichain and hence also maximal.

	If \(G\) is not totally ordered, any element incomparable to \(1\) generates an antichain subgroup \(C \subg G\) with \(C \cong \Z\).
	In this case, \(\ordl\) is induced by the projection onto \(G/C \cong \Z^{n-1} \oplus T'\).
	But the torsion part \(T'\) is again trivially ordered so \(\ordl\) is actually induced by the projection \(G \onto \Z^{n-1}\).

	By induction, \(\Z^{n-1}\) contains a unique maximal antichain subgroup \(H'\). Its preimage under the projection is then the unique maximal antichain subgroup of \(G\) by \Cref{ker-max-ac}.

\item
	Let \(g \in G/H\) be incomparable to \(1\) and let \(g_0\) be a preimage of \(g\) under the projection \(G \onto G/H\).
	Then \(g_0\) is also incomparable to \(1\).
	As \(H\) is the only maximal antichain subgroup, \(g_0 \in H\).
	But then, \(g = 1\) so \(G/H\) is totally ordered.

\item
	In \Cref{total-archimedean-in-r} we saw that every totally ordered abelian group is a subgroup of \(\R\).
	Or in other words, every order on such a group is induced by the inclusion of said group into \(\R\).

	We know that \(\ordl\) is induced by an order on \(G/H\) and \(G/H\) is a totally ordered abelian group.
	By \Cref{inducing-maps-chain}, \(\ordl\) is induced by a map \mbox{\(G \onto G/H \into \R\)}.

\end{enumerate}\end{proof}

\section{\(\Sigma\)-invariants for partial orders}\label{bns}

The \(\Sigma\)-invariant \(\bns G\) is a set of characters \(G \to \R\).
By identifying a character with the order it induces on \(G\), we may interpret \(\bns G\) as a set of orders on \(G\) instead.
With this goal in mind, let us start by having a look at the classical setting as introduced in \cite{BNS}.

\begin{Definition}\label{original-bns}
	Let \(G\) be a group with finite generating set \(S\).
	Denote its \emph{character sphere} by \(S(G) \coloneqq (\Hom(G, \R) \setminus 0) / \sim\),
	where \(\Phi \sim \Psi \iff \Phi = \lambda\Psi\) for some \(\lambda > 0\).

	Then the \emph{(first) \(\Sigma\)-invariant} \(\bns G \subseteq S(G)\) is the subset containing all characters \(\Phi\) such that the full subgraph of \(\Cay{G,S}\) spanned by \(\inv\Phi([0,\infty))\) is connected.
	By the \emph{full subgraph spanned by a subset} \(K \subseteq G\) we mean the graph consisting of all vertices \(g \in K\) and all edges in \(\Cay{G,S}\) for which both endpoints are in \(K\).
\end{Definition}

This is not the original formulation that Bieri, Neumann and Strebel used.
The equivalence of this definition here and the original one may be found for example in \cite{Strebel-notes}.
There one also finds the fact that the definition does not depend on the representative modulo \(\sim\), nor on the choice of finite generating set \(S\).

Along with the \(\Sigma\)-invariant comes the relative character sphere which is defined as follows:
\begin{Definition}\label{original-relative-character-sphere}
	Let \(G\) be a group, \(S(G)\) its character sphere and \(N \normsub G\) a normal subgroup.

	Then the \emph{relative character-sphere} is \[
		S(G, N) \coloneqq \{\Phi \in S(G) \mid N \subg \ker\Phi \}
	\]
\end{Definition}

The two are connected to finite connectedness of \(N\) by the following theorem.

\begin{Theorem}[Bieri, Neumann, Strebel \cite{BNS}] \label{original-bns-iff-fg-abelian}
	Let \(G\) be a finitely generated group and \(N \normsub G\) a normal subgroup such that \(G/N\) is abelian.

	Then \(N\) is finitely generated if and only if \(S(G,N) \subseteq \bns G\).
\end{Theorem}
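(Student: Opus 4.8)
The plan is to prove the equivalence by reducing to a normal form and then treating the two implications separately. Writing $Q = G/N$, the characters vanishing on $N$ are exactly $\Hom(Q, \R)$, and since $\R$ is torsion-free every such character kills the torsion of $Q$; hence both $\relS G N$ and the finite generation of $N$ are insensitive to torsion, and I may assume $Q \cong \Z^n$ is free abelian, so that $\relS G N \cong S^{n-1}$ is a \emph{compact} subsphere of $S(G)$. At the outset I would record the two standard tools I intend to use: that $\bns G$ is an \emph{open} subset of $S(G)$, and the rank-one ``fibering'' criterion — for a surjection $\Phi\colon G \onto \Z$ with kernel $M$ and a lift $t$ of $1$, one has $[\Phi] \in \bns G$ if and only if $M$ is finitely generated as a monoid under positive conjugation by $t$, so that $\{[\Phi], [-\Phi]\} \subseteq \bns G$ if and only if $M$ is finitely generated as a group.

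For the implication $N$ finitely generated $\Rightarrow \relS G N \subseteq \bns G$, I would argue connectivity of the positive cone directly, and this is the easier direction. Fix $\Phi \in \relS G N$ and let $\bar\Phi\colon Q \to \R$ be the induced character, so that the projection $\pi\colon G \onto Q$ carries $\inv\Phi([0,\infty))$ onto the half-space $\inv{\bar\Phi}([0,\infty)) \subseteq \Z^n$. That half-space is connected in $\Cay Q$, being essentially convex. Choosing a finite generating set of $G$ consisting of a finite generating set of $N$ together with lifts of a basis of $Q$, I would lift a connecting path downstairs to one upstairs: the $N$-generators move within a single $\pi$-fibre (a coset of $N$) at a \emph{constant} $\Phi$-level, and finite generation of $N$ is exactly what guarantees that any two points of such a fibre are joined by a bounded path staying at that level, hence never dipping below $0$. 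Splicing the fibre-paths into the downstairs path yields connectivity of $\inv\Phi([0,\infty))$.

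The harder implication $\relS G N \subseteq \bns G \Rightarrow N$ finitely generated is where I expect the main obstacle, since it must manufacture an algebraic finite generating set from a geometric covering condition. Here I would combine openness of $\bns G$ with the density of rational directions: every rational $[\Phi] \in \relS G N$ — a map $G \onto \Z$ with $N \subg \ker\Phi$ — lies in $\bns G$, so by the rank-one criterion $\ker\Phi$ is finitely generated as a monoid in the positive direction, and its antipode handles the negative direction, making each corank-one kernel $M := \ker\Phi \supseteq N$ finitely generated as a group. I would then induct on $\rk Q$: the subgroup $M$ is finitely generated, $N \normsub M$ with $M/N$ free abelian of rank $n-1$, and the hypothesis should restrict to $\relS M N \subseteq \bns M$, after which the induction terminates at rank zero with $M = N$ finitely generated. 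The technical heart, and the obstacle I would budget the most effort for, is precisely this descent: verifying that the covering condition on $G$ passes to the finitely generated normal subgroup $M$, i.e. controlling how $\bns{}$ and the character sphere behave under restriction along $M \into G$. It is exactly the openness of $\bns G$ — furnishing uniform, slightly perturbable connectivity certificates at every point of the compact sphere — that I expect to make this restriction argument go through.
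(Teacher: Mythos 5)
First, a point of order: the paper never proves this statement. Theorem~\ref{original-bns-iff-fg-abelian} is imported verbatim from \cite{BNS} (with \cite{Strebel-notes} cited for the equivalence with the Cayley-graph definition) and is used as a black box in the proof of the paper's main theorem, so there is no internal proof to compare yours against; what follows judges your attempt against the classical proofs. Your forward direction (finite generation of \(N\) implies \(\relS G N \subseteq \bns G\)) is the standard lifting argument and is essentially correct: connect images in the half-space of \(\Z^n\), lift using the chosen generating set, and patch within \(N\)-cosets.

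The converse, however, has a genuine gap, located exactly at the step you defer: the descent from \(\relS G N \subseteq \bns G\) to \(\relS M N \subseteq \bns M\), where \(M = \ker\Phi\) for a rational \(\Phi\). This is not a technical verification that openness can be expected to dispatch. Granting the forward direction, the descent statement is \emph{equivalent} to the theorem being proved, since by that theorem both sides are equivalent to finite generation of \(N\); so deferring it defers all of the content. Worse, the pointwise restriction principle that would naturally drive it is false: membership in \(\bns G\) does not pass to restrictions along a finitely generated normal subgroup \(M \normsub G\) with \(G/M \cong \Z\), even when the restricted character is nonzero. Take \(G = F_2 \times \Z\) and \(M = F_2\): any character \(\hat\chi\) that is nonzero on both factors satisfies \([\hat\chi] \in \bns G\) (from any vertex, travel in the \(\Z\)-direction to gain \(\hat\chi\)-height, spell out the \(F_2\)-coordinate at high level, then descend), yet \(\restr{\hat\chi}{M}\) is a nonzero character of \(F_2\) and \(\bns{F_2} = \emptyset\). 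Hence any correct descent must exploit the hypothesis on the \emph{whole} sphere, not character by character, and your proposed mechanism does not do this: openness of \(\bns G\) together with compactness of \(\relS G N\) can at best make connectivity certificates uniform, but uniformity is not the obstruction. The obstruction is that paths in the Cayley graph of \(G\) may wander freely across cosets of \(M\) at zero cost for characters killing the \(\Z\)-direction (exactly what the example above exploits), and perturbation buys nothing here, since the perturbed characters \(\hat\psi \pm \epsilon\Phi\) still vanish on \(N\) and are therefore already covered by the hypothesis you started from. The classical proofs close this gap with input of a different nature: the algebraic characterisation of \(\bns G\) by monoid generation --- for rational points, ascending HNN decompositions over finitely generated bases --- assembled over finitely many rational directions of \(\relS G N\) into an explicit finite generating set of \(N\). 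Without that idea, or a substitute for it, your induction does not close.
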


This theorem gives us a criterion for the kernel of a map being finitely generated only if the codomain of that map is an abelian group.
The criterion then has us consider all maps to \(\R\).
In order to generalise to maps onto non-abelian groups, we need to find an analogue of ``maps to \(\R\)''.
We will see later that one way to do this is to consider full archimedean orders on \(G\).
Recall \Cref{example-abelian-orders} to see how this worked for \(G = \Z^2\).

Rephrasing \Cref{original-bns} in the language of partial orders yields \(\Phi \in \bns G\) if and only if the full subgraph of the Cayley graph spanned by \((\ker\Phi)_{\ordge}\) is connected, where \(\ordl\) is the order on \(G\) induced by \(\Phi\) and the standard order on \(\R\).
We would like to use this as a definition in cases where the codomain is any partially ordered group.
To make that work, we first need another notion of being connected.
\begin{Definition}
	Let \(G\) be a group and \(K \subseteq G\) a subset.
	We say that \(K\) is \emph{coarsely connected} if there exist a finitely generated subgroup \(H \subseteq G\) containing \(K\) and a finite generating set \(S \subseteq H\) such that the full subgraph of \(\Cay{H,S}\) spanned by \(K\) is connected.
\end{Definition}
\begin{Remark}\begin{enumerate}\item[]\label{coarsely-conn-indep-of-surroundings}
\item
	If \(G\) itself is finitely generated, then there is no need to pass to a subgroup \(H\) as every finite generating set of a subgroup is contained in a finite generating set of \(G\).

\item
	If \(G\) is contained in a finitely generated group \(G'\) such that \(K\) is coarsely connected as a subset of \(G'\), then \(K\) is also coarsely connected as a subset of \(G\).
	If \(K\) is coarsely connected as a subset of \(G\), then it is also coarsely connected as a subset of \(G'\).

\item
	Suppose \(G\) is finitely generated and \(S\) is a finite generating set.
	Then \(K \subseteq G\) is coarsely connected if and only if there exists some constant \(n \in \N\) such that for every \(g, h \in K\) there exists an \(n\)-path in \(\Cay{G,S}\) from \(g\) to \(h\) supported on \(K\).
	An \(n\)-path supported on \(K\) is a path in \(\Cay{G,S}\) such that any segment of \(n\) consecutive vertices on the path contains at least one point in \(K\).
	To see that this is equivalent to the definition of coarse connectedness, note that if \(K\) is connected by \(n\)-paths with respect to \(S\), then it is connected by \(1\)-paths with respect to the finite generating set that consists of words in \(S\) of length at most \(n\).
\end{enumerate}\end{Remark}
\begin{Example}
	In the group of integers, the subset of even numbers is coarsely connected as it is connected with respect to the generating set \(\{X, X^2\}\).
	The set of powers of \(2\) is not coarsely connected because the distance between two adjacent powers of \(2\) is unbounded.
	Similarly, for any surjective map \(\Phi\colon F_2 \onto \Z\), the preimage of the positive numbers is not coarsely connected.
	Recall \Cref{fig-cay-fg} for an illustration of the latter.
\end{Example}
In case \(G\) is not finitely generated, it might not be apparent why this is the correct definition.
But we will see in \Cref{nilpotent} how this definition ties in nicely with the finitely generated case.

Now we can define the \(\Sigma\)-invariant for maps to potentially non-abelian groups:

\begin{Definition}
	Let \(G\) be a group.
	Then the \emph{order \(\Sigma\)-invariant} \(\obns G\) is the subset of the set of non-trivial full archimedean orders on \(G\) defined as follows.

	Let \(\ordl\) be a non-trivial full archimedean order on \(G\).
	Then \(\ordl \in \obns G\) if and only if for every antichain normal subgroup \mbox{\(K \normsub G\)} that is maximal among antichain normal subgroups, \(K_{\ordge}\) is coarsely connected.
\end{Definition}

To understand how this definition generalises \Cref{original-bns}, we note that they align if we identify a character with the order it induces on \(G\).

\begin{Lemma}\label{maps-are-orders}
	Let \(G\) be a group and \(\Phi\colon G \to \R\) a character.
	Then \[\Phi \in \bns G \iff \indor\ordl\Phi \in \obns G,\]
	where \(\ordl\) is the standard order on \(\R\).
\end{Lemma}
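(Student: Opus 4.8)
The plan is to strip the order-theoretic wrapping off the right-hand side until both membership conditions become assertions about the single set $\inv\Phi([0,\infty))$, and then to reconcile the two notions of connectedness using the fact that $\bns G$ does not depend on the chosen finite generating set. Throughout I take $G$ to be finitely generated, which is forced by $\bns G$ being defined at all, and I set $\ordl \coloneqq \indor\ordl\Phi$, whose positive cone is $\poscone G = \inv\Phi((0,\infty))$ by \Cref{explicit-induced-order}; since $\Phi$ is a character it is non-zero, so this cone is non-empty and $\ordl$ is non-trivial.

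First I would verify that $\ordl$ is a full archimedean order, so that asking whether $\ordl \in \obns G$ is meaningful and is governed solely by the coarse-connectedness clause. Archimedeanness is immediate: if $g^i \ordl h$ for every $i \in \Z$ then $i\,\Phi(g) < \Phi(h)$ for all $i$, which forces $\Phi(g) = 0$, so no positive element is infinitesimal; alternatively cite \Cref{total-archimedean-in-r}, as the standard order on $\R$ is archimedean. Primitivity is equally direct since $\Phi(g^n) = n\,\Phi(g)$ and $n > 0$. For factorizing, observe that any antichain normal subgroup $H \normsub G$ meets $\poscone G$ trivially (\Cref{torsion-is-antichain}), hence $\Phi(H) = \{0\}$, i.e.\ $H \subseteq \ker\Phi$; then the order induced by the projection $\pi\colon G \onto G/H$ together with the order that $\Phi$ induces on $G/H$ has positive cone $\inv\pi(\inv{\bar\Phi}((0,\infty))) = \inv\Phi((0,\infty)) = \poscone G$, so it equals $\ordl$ by uniqueness (\Cref{order-from-positive}).

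Next I would identify the maximal antichain normal subgroups. The computation just made shows every antichain normal subgroup is contained in $\ker\Phi$, while $\ker\Phi$ is itself normal and an antichain (it meets $\poscone G$ trivially by \Cref{torsion-is-antichain}), so $\ker\Phi$ is the unique subgroup maximal among antichain normal subgroups. It then remains to compute $(\ker\Phi)_{\ordge}$: for $k \in \ker\Phi$ one has $g \ordge k$ iff $\Phi(g) > \Phi(k) = 0$ or $g = k \in \ker\Phi$, that is, exactly when $\Phi(g) \geq 0$. Hence $(\ker\Phi)_{\ordge} = \inv\Phi([0,\infty))$, and therefore $\ordl \in \obns G$ if and only if $\inv\Phi([0,\infty))$ is coarsely connected.

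Finally I would bridge ``coarsely connected'' with ``spans a connected subgraph''. By \Cref{original-bns}, $\Phi \in \bns G$ means precisely that the full subgraph of $\Cay{G,S}$ spanned by $\inv\Phi([0,\infty))$ is connected for the fixed generating set $S$; and since $G$ is finitely generated, \Cref{coarsely-conn-indep-of-surroundings} lets us take the ambient group to be $G$ itself, so coarse connectedness of $\inv\Phi([0,\infty))$ means its spanned subgraph is connected for \emph{some} finite generating set. The two therefore differ only in ``the fixed $S$'' versus ``some $S$'', and this gap is closed by the recorded fact that membership in $\bns G$ is independent of the choice of generating set. I expect this last paragraph to be the main obstacle: one must be careful that coarse connectedness quantifies existentially over generating sets whereas the classical invariant is phrased with a fixed one, and it is the generating-set independence of $\bns G$ — not anything about orders — that reconciles them. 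Chaining the three reductions yields the desired equivalence $\Phi \in \bns G \iff \ordl \in \obns G$, both being equivalent to coarse connectedness of $\inv\Phi([0,\infty))$.
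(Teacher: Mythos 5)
Your proposal is correct and follows essentially the same route as the paper's own proof: identify \(\ker\Phi\) as the unique maximal antichain normal subgroup, observe that \((\ker\Phi)_{\ordge} = \inv\Phi([0,\infty))\), and reconcile coarse connectedness with the classical definition of \(\bns G\) via its independence of the generating set. The only difference is that you explicitly check that \(\indor\ordl\Phi\) is non-trivial, full and archimedean, a point the paper's proof leaves implicit (it is covered by \Cref{total-archimedean-in-r} and the remark following \Cref{example-abelian-orders}).
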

\begin{proof}
	Take \(G\) to be ordered by \(\indor\ordl\Phi\).
	Then \(\ker\Phi\) is the only maximal antichain subgroup of \(G\) by \Cref{ker-max-ac}.
	Hence \(\indor\ordl\Phi \in \obns G\) if and only if \((\ker\Phi)_{\ordge}\) is coarsely connected.
	The latter is equivalent to \((\ker\Phi)_{\ordge}\) being connected for some finite generating set, which is the definition of \(\Phi \in \bns G\).
\end{proof}

Now that we have an analogue of \(\bns G\), we are missing just one ingredient to state a generalised version of \Cref{original-bns-iff-fg-abelian}:
The relative character sphere \(\relS G N\).

\begin{Definition}
	Let \(G\) be a group, \(N\) a normal subgroup and \(\pi\colon G \onto G/N\) the projection map.
	Then define the \emph{relative order sphere} as \[
		\orelS G N \coloneqq \{\indor\ordl\pi ~\mid~ \ordl ~\text{a non-trivial full archimedean order on}~ G/N\}.
	\]
\end{Definition}

Recall that in case \(G/N\) is abelian, \Cref{characterisation-abelian-orders} tells us that every full archimedean order on \(G/N\) is induced by a map to \(\R\).
\(\orelS G N\) contains all orders induced by maps to \(G/N\).
As \(G/N\) is abelian, every order on \(G/N\) is induced by a map to \(\R\).
That is \(\orelS G N\) contains precisely those orders induced by maps \(G \to G/N \to \R\), which are by definition exactly the maps in \(\relS G N\).
So this is actually a generalisation of \(\relS G N\) for \(G/N\) abelian in the same sense as \(\obns G\) is for \(\bns G\).

The following is an often useful description of the relative order sphere.
\begin{Lemma}\label{rel-sphere-all-pi}
	For a group \(G\) and \(N \normsub G\) a normal subgroup, \(\orelS G N\) is the set \[
		\{\ordl \mid \ordl \text{a non-trivial full archimedean order on}~ G ~\text{such that}~ N ~\text{is an antichain}\}.
	\]
\end{Lemma}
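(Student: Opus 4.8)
The plan is to prove the asserted set equality by two inclusions, writing $Q \coloneqq G/N$ and $\pi\colon G \onto Q$ for the projection throughout, and reserving $\ordl_G$ and $\ordl_Q$ for orders on $G$ and on $Q$. Both inclusions rest on the explicit description of induced orders in \Cref{explicit-induced-order}: the order $\ordl_G = \indor{\ordl_Q}{\pi}$ has positive cone $\inv\pi(\poscone Q)$ and satisfies $1 \ordl_G g \iff 1 \ordl_Q \pi(g)$.

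For the inclusion of $\orelS G N$ into the set of non-trivial full archimedean orders on $G$ for which $N$ is an antichain, I start from a non-trivial full archimedean order $\ordl_Q$ on $Q$ and set $\ordl_G \coloneqq \indor{\ordl_Q}{\pi}$. Since $\pi(N) = \{1\}$ and $1 \notin \poscone Q$, the cone $\inv\pi(\poscone Q)$ misses $N$, so $N \cap \poscone G = \emptyset$ and $N$ is an antichain by \Cref{torsion-is-antichain}. Non-triviality passes to $\ordl_G$ by lifting any positive element of $Q$ to a preimage; the archimedean property passes by \Cref{total-archimedean-in-r}, since an induced order is archimedean exactly when the inducing one is; and fullness passes by the remark immediately after \Cref{example-abelian-orders}, as $\ordl_G$ is induced by the surjection $\pi$ from the full order $\ordl_Q$. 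This gives the first inclusion.

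For the reverse inclusion, let $\ordl_G$ be non-trivial, full and archimedean with $N$ an antichain. As $N$ is a normal antichain subgroup and $\ordl_G$ is full, hence factorizing, $\ordl_G$ is induced by $\pi$ for some order $\ordl_Q$ on $Q$; thus $\ordl_G = \indor{\ordl_Q}{\pi}$ and it remains to show $\ordl_Q$ is non-trivial, full and archimedean. Non-triviality and the archimedean property follow as above, now reading \Cref{total-archimedean-in-r} in the opposite direction. Primitivity of $\ordl_Q$ is easy via surjectivity: given $q^n \ordl_Q r^n$, choose preimages $a, b \in G$; then $1 \ordl_Q \pi(a^{-n}b^n)$ gives $a^n \ordl_G b^n$, primitivity of $\ordl_G$ yields $a \ordl_G b$, and applying $\pi$ back gives $q \ordl_Q r$.

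The main obstacle is showing that $\ordl_Q$ is factorizing, the only point where the ``quotient of a full order is full'' direction is not handed to us by a cited remark. Given an antichain normal subgroup $H' \normsub Q$, I would set $H \coloneqq \inv\pi(H') \normsub G$; this is again an antichain, since any comparability $g \ordl_G h$ with $g,h \in H$ would force $\pi(g)^{-1}\pi(h) \in H' \cap \poscone Q$ (an elementary argument, also contained in \Cref{ker-max-ac}, needing no finiteness hypothesis). Factorizing of $\ordl_G$ then provides an order $\ordl_{Q/H'}$ on $G/H \cong Q/H'$ inducing $\ordl_G$ along the projection $G \onto G/H$, which factors as $\rho\circ\pi$ with $\rho\colon Q \onto Q/H'$. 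Comparing the two descriptions of $\poscone G$, for every $g \in G$ one has $1 \ordl_Q \pi(g) \iff 1 \ordl_{Q/H'} \rho(\pi(g))$, and since $\pi$ is onto this reads $1 \ordl_Q q \iff 1 \ordl_{Q/H'} \rho(q)$ for all $q \in Q$; by \Cref{explicit-induced-order} this says $\ordl_Q = \indor{\ordl_{Q/H'}}{\rho}$, i.e.\ $\ordl_Q$ is induced by the projection $\rho$, which is exactly the factorizing condition for $\ordl_Q$. Keeping track of which order is induced along which map, via \Cref{inducing-maps-chain} and \Cref{explicit-induced-order}, is the only real bookkeeping; everything else is immediate.
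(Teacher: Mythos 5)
Your proof is correct and takes essentially the same route as the paper: the forward inclusion via the explicit positive cone \(\inv\pi(\poscone Q)\) of an induced order, and the reverse inclusion by applying the factorizing property of \(\ordl_G\) to the normal antichain subgroup \(N\). You are in fact more thorough than the paper, whose two-sentence proof never checks that the order on \(G/N\) produced in the reverse direction is itself non-trivial, full and archimedean; your verification of primitivity and (especially) of the factorizing property of \(\ordl_Q\) supplies precisely what the paper leaves implicit.
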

\begin{proof}
	By definition of \(\indor\ordl\pi\), for every order in \(\orelS G N\), \(N\) is an antichain.

	Now let \(\ordl\) be a full order such that \(N\) is an antichain.
	Then it is induced by the projection \(G \onto G/N\) and hence \(\ordl \in \orelS G N\).
\end{proof}

\section{Classification of partial orders on nilpotent groups}\label{nilpotent-orders}

By replacing \(\bns G\) and \(\relS G N\) in \Cref{original-bns-iff-fg-abelian} by \(\obns G\) and \(\orelS G N\), we obtain a criterion for finite generatedness of \(N\).
We will make this precise in \Cref{nilpotent} for the case \(G/N\) nilpotent.
But before we do that, we want to study what \(\orelS G N\) looks like in this case.
By definition, \(\orelS G N\) is in 1-to-1 correspondence with the non-trivial full archimedean orders on \(G/N\).
So our actual goal is to understand orders on nilpotent groups.
To begin, we recall the definition and some basic facts about nilpotent groups.
A comprehensive introduction may be found for example in \cite{CMZ}.

The trivial group is the only \emph{nilpotent group of class \(0\)}.
A \emph{nilpotent group of class \(n+1\)} is a group \(G\) such that \(G/Z(G)\) is nilpotent of class \(n\) but \(G\) itself is not.
Here and from now on, \(Z(G)\) denotes the \emph{center} of \(G\).
That is the subgroup of \(G\) containing all \(g\) such that \(g\) commutes with every other element of \(G\).
For example, if \(G\) is abelian, then \(Z(G) = G\) so \(G\) is nilpotent of class at most \(1\).

From a nilpotent group \(G\), we can derive what is called the \emph{lower central series} \[
	G_0 \supg G_1 \supg \dots \supg G_n.
\]
Its terms are \(G_0 = G\) and \[G_{i+1} = [G, G_i] = \bgrp{[g,h]}{g \in G, h \in G_i} \subg G.\]
By \([g,h]\) we mean the commutator of \(g\) and \(h\) and we adopt the convention \[[g,h] = \inv g \inv h gh.\]
Nilpotency of \(G\) guarantees that after finitely many steps, the trivial group appears as a term in the lower central series.
If \(n\) is the nilpotency class of \(G\), then \(G_n\) is the first trivial term.

We call \(G\) \emph{free-nilpotent of class \(n\) and rank \(k\)} if it has a \(k\)-element generating set and has no more relations than those absolutely necessary to make the group nilpotent.
To be precise, we get a presentation \[
	G = \grp{a_1\dots a_k}{G_n = 1}.
\]
Every nilpotent group of class no larger than \(n\) on at most \(k\) generators is a quotient of the respective free-nilpotent group.
This is analogue to free groups or free-abelian groups in the categories of groups or abelian groups.

Note that if \(S\) is a generating set of \(G\) and \(T\) is a generating set of \(G_i\), then \(G_{i+1}\) is generated by \(\{[g,h] \mid g \in S, h \in T\}\).
In particular, from a generating set \(S\) of \(G\), we may produce in a canonical way generating sets of every subgroup in the lower central series.

For any subgroup \(H \subg G\) we have that \(H\) is also nilpotent and \(H_i \subg G_i\).
In particular, the nilpotency class of \(H\) is no larger than the nilpotency class of \(G\).
If \(G\) is finitely generated, then so is \(H\).

We want to study the relation between \(\obns G\) and finiteness properties of the kernels of maps onto nilpotent groups.
In order to do that, we are first going to understand partial orders on nilpotent groups.

As we will see later, it will at least in this work be enough to consider cases where \(G\) is finitely generated and the order is full and archimedean.
We make these assumptions whenever they are convenient.
Let us start by looking at some examples.
\begin{Example}\label{example-nilpotent-orders}\begin{enumerate}\item[]
\item
	Recall \Cref{example-abelian-orders} constructing all full orders on finitely generated free-abelian groups.

\item
	Let \(H\) be the free-nilpotent group of class \(2\) and rank \(2\).
	It is also known as the \emph{(discrete) Heisenberg group}.
	To be specific, we have \[
		H = \bgrp{a,b}{1 = [a,[a,b]] = [b,[a,b]]}
	\]
	Any element \(g \in H\) can be written uniquely as \(g = a^\alpha b^\beta {\comm a b}^\gamma\) for some \(\alpha, \beta, \gamma \in \Z\).

	Any order on \[\Z^2 = H_\ab = H/\grpg{\comm a b}\] induces an order on \(H\) via the projection map.
	Even more, for any order on \(H\) the projection modulo \(\comm a b\) is order preserving.
	Thus any order is lexicographic with respect to the projection.
		
	In particular, if the order we pick on \(H_\ab\) is non-trivial, then the only way to complete this to an archimedean order on \(H\) is the order induced by the projection.

	So any archimedean order on \(H\) is either induced by the projection \mbox{\(H \onto H_\ab\)} or it is one of the two orders \[
		a^\alpha b^\beta {\comm a b}^\gamma \ordg 1 \iff \alpha = \beta = 0 \land \gamma > 0
	\] and \[
		a^\alpha b^\beta {\comm a b}^\gamma \ordg 1 \iff \alpha = \beta = 0 \land \gamma < 0
	\]
	That is, any order is lexicographic with respect to the exact sequence \[\grpg{\comm a b} \into H \onto H_\ab.\]
	To obtain an archimedean order, at least one of the two factors has to be ordered trivially by \Cref{total-archimedean-in-r}.

\item
	Now let \(G\) be the free-nilpotent group of class \(2\) and rank \(3\). That is \[
		G = \bgrp{a,b,c}{\comm x {\comm y z} = 1 ~\forall x,y,z \in \{a,b,c\}}
	\]
	It contains the Heisenberg group \(H\) as the subgroup generated by \(\{a, b\}\).
	Any order on \(G\) therefore restricts to an order on \(H\).
	If \(\restr{\ordl}H\) is induced by an order on \(H_\ab\), then \(\ordl\) is induced by an order on \(G_\ab\).
	If \(\restr{\ordl}H\) is one of the two archimedean orders such that \(\comm a b\) and \(1\) are comparable, then \(\ordl\) is induced by one of the inclusions \[
		\Z^2 = \grpg{\comm a b, c}_\ab \into G
	\] or \[
		\Z^3 \cong \grpg{\comm a b,\comm a c,\comm b c}_\ab = G_1 \into G.
	\]
	Up to choice of embedding of \(H\) and hence isomorphism of \(G\), these are all archimedean orders.
\end{enumerate}\end{Example}
For now, we omit the proof. At the end of this section we will have the tools to verify that these examples are indeed correct.

In the examples we see that the orders are largely determined by orders on \(G_\ab\).
For instance in the third example if there is more than a single generator comparable to \(1\), all of \(G_1\) is necessarily trivially ordered.
This leads to an intuition saying the more elements of \(G \setminus G_1\) are comparable to \(1\), the fewer possibilities there are to extend an order on \(G \setminus G_1\) to an order on \(G\).
With these ideas in mind, our goal is to make precise what orders on nilpotent groups look like.
The case of nilpotent groups of class \(1\), that is, abelian groups, has already been dealt with in \Cref{characterisation-abelian-orders}.

\begin{Remark}\label{abelian-all-generators-positive}
	Let \(G\) be a finitely generated free-abelian group and \(\Phi\colon G \to \R\).
	Then \(\ker\Phi\) is also finitely generated free-abelian.
	And so is \(G / \ker\Phi\), as any torsion element would have to be mapped by \(\Phi\) to some torsion element of \(\R\), but \(\R\) is torsion-free.

	Hence \(\ker\Phi \oplus G / \ker\Phi\) is also finitely generated free-abelian.
	By counting dimensions we see that it is even isomorphic to \(G\).

	Take \(\R\) to be ordered by the standard order.
	As \(\Phi\) factors through \(G / \ker\Phi\), \(\Phi\) induces a total order on \(G / \ker\Phi\) and the projection map then induces an order on \(G\).
	This is the same order that \(\Phi\) induces on \(G\).

	Any isomorphism \(G \cong \ker\Phi \oplus G / \ker\Phi\) produces a free-abelian generating set of \(G\) from such sets for \(\ker\Phi\) and \(G / \ker\Phi\).
	That is, for any full archimedean order on \(G\), if we make the right choice of free generators, every generator is either positive or incomparable to \(1\)
	and the positive cone is a subset of the subgroup spanned by the positive generators.
	This is not true for arbitrary nilpotent groups.
	For example in the Heisenberg group, it is possible that none of the generators is positive but their commutator is, as we have seen in \Cref{example-nilpotent-orders}.
\end{Remark}

Now let us consider the case where \(G\) is a non-abelian nilpotent group.
Any order on the abelianisation \(G_\ab\) induces an order on \(G\) and these orders we already understand by \Cref{characterisation-abelian-orders}.
They are the ones such that \(G_1\) is an antichain.
For all other orders, the following lemma is a restriction on how the order on \(G_1\) may look like.

\begin{Lemma}\label{commutator-infinitesimal}
	Let \(G\) be a finitely generated ordered nilpotent group and \(g, h \in G\) such that \(\comm g h \ordl g \ordg 1\).

	Then \(\comm g h \ordll g\).
\end{Lemma}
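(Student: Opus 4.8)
The plan is to rephrase everything in terms of the positive cone and then exploit that it is closed under conjugation. Writing $c = \comm{g}{h}$, the assertion $c \ordll g$ means $c^m \ordl g$ for every $m \in \Z$. Right‑multiplying $c^m \ordl g$ by $g^{-1}$ and taking inverses (both legitimate by \Cref{order-from-positive}) turns this into the equivalent family of membership statements
\[
	g c^{m} \ordg 1 \quad\text{for all } m \in \Z,
\]
i.e.\ $g c^m \in \poscone G$ for all $m$. So it suffices to prove the latter, which is more convenient because $\poscone G$ is closed under conjugation.

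The one identity I would lean on is the elementary $h^{-1} g h = g\,\comm{g}{h} = gc$, valid in any group, together with its iterate
\[
	h^{-m} g\, h^{m} = g\,\comm{g}{h^m}, \qquad \comm{g}{h^m} = \prod_{i=0}^{m-1}\bigl(h^{-i} c\, h^{i}\bigr).
\]
Since $c \in G_1$ and $\comm{G_1}{G} = G_2$, each factor $h^{-i} c\, h^{i}$ is congruent to $c$ modulo $G_2$, so $\comm{g}{h^m} \equiv c^m \pmod{G_2}$. The decisive point is that $\poscone G$ is conjugation‑closed (\Cref{order-from-positive}), whence $g \ordg 1$ gives $h^{-m} g\, h^{m} \ordg 1$, that is $g\,\comm{g}{h^m} \ordg 1$, for every $m \in \Z$. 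This already settles the case where $G$ has nilpotency class at most $2$: then $G_2 = 1$, so $\comm{g}{h^m} = c^m$ exactly and $g c^m = h^{-m} g\, h^{m} \ordg 1$ for all $m$, which is what we want. Note the hypothesis $c \ordl g$ is not even used here; it becomes essential only in higher class.

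For the general case I would induct on the nilpotency class, first replacing $G$ by $\grpg{g,h}$, on which $\ordl$ restricts to a bi‑invariant order and which contains everything relevant. The discrepancy between $\comm{g}{h^m}$ and $c^m$ is the element $w_m = c^{-m}\comm{g}{h^m} \in G_2$, assembled from the conjugates $h^{-i} c\, h^{i}$; passing from $g\,\comm{g}{h^m} \ordg 1$ to $g c^m \ordg 1$ requires understanding $w_m$ (equivalently, showing $w_m \ordl h^{-m} g\, h^{m}$). This is exactly where $c \ordl g$ enters: conjugating the hypothesis gives $h^{-i} c\, h^{i} \ordl h^{-i} g\, h^{i}$, which controls the first‑order building blocks of $w_m$, while the genuinely deeper commutators that make up the rest of $w_m$ live in lower terms of the lower central series and should fall to the inductive hypothesis.

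The main obstacle is precisely this inductive step, and it is subtler than it looks: the given order need not descend to the central quotient $G/G_{n-1}$, so one cannot simply quotient out the top of the lower central series and invoke the result for smaller class. Instead I expect to argue inside $G$, peeling off one central layer $G_{n-1} \subseteq Z(G)$ at a time. On such a central layer the elementary conjugation computation from the class‑$2$ case applies verbatim, and combining it with the conjugated form of $c \ordl g$ should let the corrections $w_m$ be absorbed layer by layer. Carrying out this bookkeeping — tracking the $G_2$‑valued corrections for all $m$ simultaneously while only ever comparing against conjugates of the single element $g$ — is where the real work lies.
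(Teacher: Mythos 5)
Your reformulation (\(\comm gh \ordll g\) iff \(g\comm gh^m \ordg 1\) for all \(m\)) and the conjugation trick are sound, and your class-\(\leq 2\) argument is complete and correct --- indeed the same computation drives the paper's proof. But for nilpotency class \(\geq 3\) what you have is a plan, not a proof: the step you yourself flag as ``where the real work lies'' --- absorbing the \(G_2\)-valued corrections \(w_m\) --- is precisely the content of the lemma, and your sketch supplies no mechanism that closes it. Concretely, \(w_m\) is a product of a number of factors that grows with \(m\); conjugating the hypothesis gives \(h^{-i}\comm gh h^i \ordl h^{-i}gh^i\), and multiplying such inequalities (legitimate for bi-invariant orders) bounds a product of building blocks only by a product of \(m\) distinct conjugates of \(g\), never by the single element \(h^{-m}gh^m\) that you need. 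Nothing in your outline converts ``each factor is small'' into the uniform-in-\(m\) statement \(w_m \ordl h^{-m}gh^m\).

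The paper closes exactly this gap with two devices absent from your sketch. First, it inducts downward along the lower central series of the normal closure \(N = \ngrpg g\) of \(g\) (not on central layers of \(G\)), with the inductive statement that every element of \(N_{i+1}\) is infinitesimal with respect to \(g\); this packages the entire correction into a single element \(c \in N_{i+1}\) via the identity \(g\comm gh^{-2k} = h^{2k}gh^{-2k}c\), which can then be absorbed because \(\inv c \ordl g\), yielding \(g^2 \ordg \inv c g \ordg \comm gh^{2k}\). Second --- and this is the key point --- that absorption only bounds \(\comm gh^{2k}\) by \(g^2\), and the paper then invokes primitivity (fullness) of the order to halve exponents and conclude \(g \ordg \comm gh^{k}\). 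Your proposal never invokes fullness at all; since the conjugation argument naturally produces bounds by \(g^2\) or by products of conjugates of \(g\) rather than by \(g\) itself, some such tool appears unavoidable, and without it your ``layer by layer'' absorption does not close. (This also explains why your class-\(2\) case went through without the hypothesis \(\comm gh \ordl g\) and without fullness: there the correction is trivial, so neither device is needed.)
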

\begin{proof}
	Let \(N = \ngrpg g \normsub G\) be the normal subgroup generated by \(g\) and \(n\) the nilpotency class of \(N\).
	Note that \(\comm g h = \inv g (\inv h g h) \in N\).

	Suppose that every element of \(N_{i+1}\) is infinitesimal with respect to \(g\) and that \(\comm g h \in N_i\).
	Then for \(k \in \Z\) there is some \(c \in N_{i+1}\) (more precisely, it is some product of elements of the form \(\comm {x}{\comm g h}\), where \(x\) is again of this form or \(x=h\)) such that \[
		g{\comm g h}^{-2k} = h^{2k} g h^{-2k} c \ordg h^{2k} h^{-2k} c = c
	\] so \[
		g^2 \ordg \inv c g \ordg {\comm g h}^{2k}
	\] for any \(k \in \Z\).
	As \(\ordl\) is full, we get \(g \ordg {\comm g h}^k\) and hence \(g \ordgg \comm g h\).
	The claim follows by repeated application of this argument as \(N_n\) is trivial and therefore in particular infinitesimal with respect to \(g\).
\end{proof}

\begin{Theorem}\label{characterisation-nilpotent-orders}
	Let \(G\) be a finitely generated partially ordered nilpotent group with a full archimedean order \(\ordl\).
	Then there is a normal subgroup \(H \normsub G\) such that \(\ordl\) is induced by a total order on \(Z(G/H)\).

	That is, we may obtain any full archimedean order on \(G\) from the standard order on \(\R\) via the following chain of maps.
	\[
		\R \hookleftarrow \Z^n \cong Z(G/H) \into G/H \twoheadleftarrow G
	\]
\end{Theorem}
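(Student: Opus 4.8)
The plan is to reduce to the abelian situation of \Cref{characterisation-abelian-orders} by first passing to a quotient of $G$ on which the order becomes concentrated on the centre. First I would choose $H \normsub G$ maximal among antichain normal subgroups; such an $H$ exists because $G$ is finitely generated and hence Noetherian on subgroups. Since $\ordl$ is factorizing, it is induced by $\pi\colon G \onto G/H$ from some order $\ordl_{G/H}$. By \Cref{ker-max-ac} the preimage under $\pi$ of any antichain normal subgroup of $G/H$ is an antichain normal subgroup of $G$ containing $H$, so maximality of $H$ forces $G/H$ to have no nontrivial antichain normal subgroup. The order $\ordl_{G/H}$ is again archimedean by \Cref{total-archimedean-in-r} and primitive (both transfer through $\pi$ via the description in \Cref{explicit-induced-order}), and it is factorizing for the trivial reason that there are no nontrivial antichain normal subgroups, hence full. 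Replacing $(G, \ordl)$ by $(G/H, \ordl_{G/H})$, I may therefore assume that $\{1\}$ is the only antichain normal subgroup of $G$.

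Next I would pin down the centre. The central torsion elements form a normal antichain subgroup by \Cref{torsion-is-antichain}, hence are trivial, so $Z(G)$ is finitely generated torsion free abelian, i.e.\ $Z(G) \cong \Z^n$. If some $z \in Z(G)$ were incomparable to $1$, then primitivity would give the same for every nonzero power: from $1 = 1^{k} \ordl z^{k}$ one gets $1 \ordl z$, and the case $z^{k} \ordl 1$ is symmetric, so $\grpg{z}$ would be a nontrivial central, hence normal, antichain subgroup — impossible. Thus $Z(G)$ is totally ordered, and being archimedean it embeds order preservingly into $\R$ by Hölder's theorem (\Cref{total-archimedean-in-r}).

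The heart of the argument is that every element comparable to $1$ is central, i.e.\ $\poscone G \subseteq Z(G)$. I would package this through the subgroup $K = \langle \comm g h : g \ordg 1 \lor g \ordl 1,\ h \in G\rangle$ generated by all commutators of a comparable element with an arbitrary one. It is normal because conjugation preserves the order, so $x^{-1}\comm g h x = \comm{x^{-1}gx}{x^{-1}hx}$ is again such a generator. If $K$ is an antichain, then $K = \{1\}$ by the reduction, which says exactly that comparable elements centralise all of $G$, giving $\poscone G \subseteq Z(G)$. To see that $K$ is an antichain one invokes \Cref{commutator-infinitesimal}: for $g \ordg 1$ with $\comm g h \ordl g$ the commutator is infinitesimal with respect to $g$, so archimedeanity forbids $\comm g h$ (or its inverse) from being positive, and each generator is incomparable to $1$. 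Promoting this from the generators to all of $K$ — together with arranging the hypothesis $\comm g h \ordl g$ — is the step I expect to be the main obstacle, since incomparability is not closed under products. I would attack it by induction on the nilpotency class, tracking the generators along the lower central series $G_0 \supg G_1 \supg \dots$ and applying \Cref{commutator-infinitesimal} repeatedly to control how products accumulate, so that no product can leave the antichain of elements incomparable to $1$.

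Finally, with $\poscone G \subseteq Z(G)$ and $Z(G) \cong \Z^n$ totally ordered, the central inclusion $Z(G) \into G$ induces on $G$ the order whose positive cone is $G \cdot \poscone{Z(G)} = \poscone{Z(G)} = \poscone G$, conjugation being trivial on central elements; by \Cref{order-from-positive} this induced order coincides with $\ordl$. Composing the embedding $Z(G) \cong \Z^n \into \R$ with this central inclusion realises $\ordl$ as induced by the total order pulled back from $\R$ on $Z(G)$, and undoing the reduction along $\pi\colon G \onto G/H$ (chaining the inducing maps as in \Cref{inducing-maps-chain}) yields exactly the chain $\R \hookleftarrow \Z^n \cong Z(G/H) \into G/H \twoheadleftarrow G$ of the statement, with $H$ the maximal antichain normal subgroup chosen at the start.
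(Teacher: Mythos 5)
Your overall architecture matches the paper's: reduce to a quotient whose centre is totally ordered, show the positive cone lies in the centre, then apply H\"older's theorem and chain the inducing maps via \Cref{inducing-maps-chain}. Three of your four steps are correct. Your one-shot reduction (quotient by an antichain normal subgroup \(H\) maximal among such, so that \(G/H\) has no nontrivial antichain normal subgroup by \Cref{ker-max-ac}) is a clean alternative to the paper's induction on the ranks \(\rk_i\), where the quotient is instead built up one maximal cyclic central antichain at a time; and your argument that the centre of the reduced group is a totally ordered copy of \(\Z^n\) (central torsion is an antichain normal subgroup, and primitivity promotes incomparability of a central element to all of its powers) is fine.

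The gap is exactly where you flag it, and it is a genuine one: the claim that \(K\) is an antichain. Knowing that every generator \(\comm g h\) is incomparable to \(1\) gives nothing for \(K\), because the set of elements incomparable to \(1\) is not a subgroup: already in \(\Z^2\) ordered by the inclusion of \(\Z \times \{0\}\), the incomparable elements \((0,1)\) and \((1,-1)\) multiply to the comparable element \((1,0)\). Your proposed fix --- ``induction on the nilpotency class \dots so that no product can leave the antichain'' --- is a restatement of the problem rather than an argument; controlling such products is precisely the delicate bookkeeping that the proof of \Cref{commutator-infinitesimal} performs internally with its correction terms \(c \in N_{i+1}\), and you would need a genuinely new version of it for products of commutators of \emph{different} comparable elements. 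The paper never meets this difficulty because it argues one element at a time, with no auxiliary subgroup: once the centre is totally ordered, suppose \(g \ordg 1\) is not central and choose \(h\) with \(1 \neq \comm g h \in Z(G)\); totality of the order on the centre forces \(\comm g h\) to be comparable to \(1\), after replacing \(h\) by \(\inv h\) (which replaces the commutator by a conjugate of its inverse) it is positive, and \Cref{commutator-infinitesimal} then exhibits it as a positive infinitesimal with respect to \(g\), contradicting archimedeanity. Hence \(\poscone G \subseteq Z(G)\) follows with no products of incomparable elements ever appearing, and if you splice this per-element argument into your step 3, the rest of your proof goes through. (The separate worry about arranging the hypothesis \(\comm g h \ordl g\) of \Cref{commutator-infinitesimal} is one you share with the paper's own application of that lemma, so I would not count it against you; the product problem is the real gap.)
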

\begin{proof}
	Suppose that \(Z(G)\) is totally ordered.
	If \(1 \ordl g\) for some \(g \notin Z(G)\), we may choose some \(h \in G\) such that \(\comm g h \in Z(G)\).
	By \Cref{commutator-infinitesimal} we get \(\comm g h \ordll g\) contradicting \(\ordl\) being archimedean.
	Thus \(\poscone G \subseteq Z(G)\).
	That is, \(\ordl\) is induced by the inclusion \(Z(G) \into G\). So by letting \(H = 1\) we see that the claim is true.

	Otherwise, we do an induction. For this, we order finitely generated nilpotent groups as follows:
	Let \(A, B\) be finitely generated nilpotent groups.
	For any \(i \in \N\), \(A_i/A_{i+1}\) is a finitely generated free-abelian group.
	Set \(\rk_i A \coloneqq \rk A_i/A_{i+1}\).
	We say that \(A\) comes before \(B\) if and only if \(\rk_i A < \rk_i B\) for the largest \(i\) such that these ranks are not equal.

	If \(Z(G)\) is not totally ordered, pick a maximal cyclic subgroup \(C \subg Z(G)\) that is an antichain.
	As a subgroup of the center, \(C\) is automatically normal in \(G\).
	Since \(\ordl\) is full, it is induced by the projection \(G \onto G/C\).

	Let \(k\) be the largest number such that \(C \subg G_k\). Then we have \[
		\rk_i G/C = \begin{cases}
			\rk_i G & \text{if}~ i \neq k \\
			\rk_i G - 1 & \text{if}~ i = k
		\end{cases}
	\]
	Note that \(\rk_i G/C \subg \rk_i G\) for any \(i\) and \(rk_k G/C < \rk_k G\).
	Hence by induction as explained above, we find that there is some \(H' \normsub G/C\), such that \(\ordl\) is induced by the projection \(G \onto G/C \onto (G/C)/H'\) and the order on \((G/C)/H'\) is induced by a total order on its center.

	By \Cref{inducing-maps-chain}, \(\ordl\) is then induced by the projection \(G \onto (G/C)/H'\) and setting \(H\) to be the kernel of this projection finishes the proof.
\end{proof}
Conversely, every choice of \(H\) such that \(G/H\) is torsion free and \(\iota\colon Z(G/H) \into \R\) induces a unique order on \(G\).
Two choices \(H, \iota\) and \(H', \iota'\) yield the same order if and only if \(H = H'\) and \(\iota = \lambda\iota'\) for some \(\lambda \in \R^+\).
Thus we get a full characterisation of all full archimedean orders on finitely generated nilpotent groups.

\begin{Remark}\label{abelian-orders-are-maps}
	Note that if \(G\) is abelian, \(G/H = Z(G/H)\), so we get that \(\ordl\) is induced by \[
		\R \hookleftarrow \Z^n \cong Z(G/H) \cong G/H \twoheadleftarrow G.
	\]
	Hence we recover that every full archimedean order on \(G\) is induced by a map to \(\R\).
\end{Remark}

To conclude the section, now is a good time to revisit \Cref{example-nilpotent-orders}.
The center of the Heisenberg group is \[ Z(H) = \grpg{\comm a b} \cong \Z. \]
Let \(\ordl\) be an order on \(H\).
By \Cref{characterisation-nilpotent-orders}, we know that there is \(P \normsub H\) such that \(\ordl\) is induced by a total order on \(Z(H/P)\).
If \(P\) is trivial, then \(Z(H)\) is totally ordered by one of the two total orders on \(\Z\).
Spelling this out, we obtain one of the two orders of the form \[
	a^\alpha b^\beta {\comm a b}^\gamma \ordg 1 \iff \alpha = \beta = 0 \land \gamma \ordg 1.
\]
If \(P\) is non-trivial it contains \(Z(H)\), so \(\ordl\) is induced by an order on some quotient of \(H / Z(H) = H_\ab \cong \Z^2\).
In particular, it is also induced by an order on \(\Z^2\), namely the one which itself is induced by the projection onto said quotient.

The case of the free-nilpotent group of class \(2\) and rank \(3\) may be handled similarly by looking at all possible intersections of \(P\) and \(Z(G)\).

\section{Maps onto nilpotent groups}\label{nilpotent}

Let \(G\) be a finitely generated group, \(Q\) nilpotent and \(\Phi\colon G \onto Q\) onto.
We would like to know if \(\ker\Phi\) is also finitely generated.

For \(Q\) nilpotent of class \(1\), that is to say \(Q\) abelian, recall \Cref{original-bns-iff-fg-abelian}.
It states that \(\ker\Phi\) is finitely generated if and only if \(\relS G{\ker\Phi} \subseteq \bns G\).
By \Cref{maps-are-orders} and \Cref{abelian-orders-are-maps}, the theorem remains true if we replace \(\relS G{\ker\Phi} \subseteq \bns G\) by \(\orelS G{\ker\Phi} \subseteq \obns G\).
This replacement also makes it possible to at least state the theorem if \(G/\ker\Phi\) is any group.
The goal of this section is to prove it in case \(G/\ker\Phi\) is nilpotent.

As we know from \Cref{characterisation-nilpotent-orders} that all orders on nilpotent groups are induced by the inclusion of the center into some quotient, let us investigate how \(\orelS G N\) and \(\obns G\) behave when passing to subgroups.

\begin{Lemma}\label{rel-sphere-pass-to-subgroup}
	Let \(G\) be a group, \(H \subg G\) a subgroup and \(\Phi\colon G \onto Q\) a map onto some group \(Q\) such that \(\ker\Phi \normsub H\).

	Let \(\ordl_G~ \in \orelS G {\ker\Phi}\). It is induced by some order \(\ordl_Q\) on \(Q\).
	Suppose that \(\ordl_Q\) is induced by the inclusion \(\Phi(H) \into Q\).
	\(\restr{\ordl_Q}{\Phi(H)}\) induces via \(\inv\Phi\) an order \(\ordl_H\) on \(H\) and \(\ordl_H~ \in \orelS H {\ker\Phi}\).

	Then \(\ordl_G\) is induced by \(\ordl_H\) via the inclusion \(H \into G\).
	This statement is also visualized in \Cref{rel-sphere-subgroup-diagram}.
	\begin{figure}
		\centering
\begin{tikzpicture}[shorten >=1pt,on grid,auto]

	\node[] (H) {\(H\)};
	\node[] (G) [right=3 of H] {\(G\)};
	\node[] (P) [below=1.5 of H] {\(\Phi(H)\)};
	\node[] (Q) [right=3 of P] {\(Q\)};
	\path[->>]
		(H) edge (P)
		(G) edge (Q)
	;
	\path[right hook ->]
		(H) edge [dashed] (G)
		(P) edge (Q)
	;
\end{tikzpicture}
		\caption{\(H \into G\) is order-inducing if all the other maps are.}
		\label{rel-sphere-subgroup-diagram}
	\end{figure}
\end{Lemma}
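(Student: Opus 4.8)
The plan is to prove the conclusion by comparing positive cones. By \Cref{explicit-induced-order}, part~\ref{explicit-induced-order-2}, if the inclusion $\iota\colon H \into G$ induces any order at all, that order must have positive cone equal to the conjugacy-closure $G \cdot \poscone{H}$, and $\iota$ induces an order precisely when $G \cdot \poscone{H}$ contains no pair of mutually inverse elements. So it suffices to establish the single identity $\poscone{G} = G \cdot \poscone{H}$. Granting it, $G \cdot \poscone{H}$ inherits from $\poscone{G}$ the absence of inverse pairs, so $\iota$ does induce an order; its positive cone is then $G \cdot \poscone{H} = \poscone{G}$, whence by the uniqueness in \Cref{order-from-positive} the induced order is exactly $\ordl_G$.

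First I would make the two cones explicit. Since $\ordl_G = \indor{\ordl_Q}{\Phi}$, part~\ref{explicit-induced-order-1} of \Cref{explicit-induced-order} gives $\poscone{G} = \inv\Phi(\poscone{Q})$. The order $\ordl_H$ is induced by $\Phi|_H\colon H \to (\Phi(H), \restr{\ordl_Q}{\Phi(H)})$, so the same lemma gives $\poscone{H} = \{h \in H \mid \Phi(h) \in \poscone{Q}\} = H \cap \poscone{G}$. In particular $\poscone{H} \subseteq \poscone{G}$, and as $\poscone{G}$ is closed under conjugation by elements of $G$ (\Cref{order-from-positive}), the inclusion $G \cdot \poscone{H} \subseteq \poscone{G}$ is immediate. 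This is the easy half.

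The substance is the reverse inclusion $\poscone{G} \subseteq G \cdot \poscone{H}$, and it is precisely here that surjectivity of $\Phi$ and the hypothesis $\ker\Phi \subseteq H$ are used. Let $g \in \poscone{G}$, so $\Phi(g) \in \poscone{Q}$. Because $\ordl_Q$ is induced by the inclusion $\Phi(H) \into Q$, \Cref{explicit-induced-order}, part~\ref{explicit-induced-order-2}, lets me write $\Phi(g) = q\,p\,\inv q$ for some $q \in Q$ and some $p \in \Phi(H) \cap \poscone{Q}$. Choosing, by surjectivity, $g_0 \in G$ with $\Phi(g_0) = q$ and $h_0 \in H$ with $\Phi(h_0) = p$, we have $h_0 \in \poscone{H}$ and $\Phi(g_0 h_0 \inv{g_0}) = \Phi(g)$. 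Hence $g$ and $g_0 h_0 \inv{g_0}$ differ by some $k \in \ker\Phi$, and rewriting $g = g_0\,(h_0 \cdot \inv{g_0} k g_0)\,\inv{g_0}$ I would absorb this discrepancy into $H$: since $\ker\Phi$ is normal in $G$ and contained in $H$, the element $h_0 \cdot \inv{g_0} k g_0$ lies in $H$ and still maps to $p$ under $\Phi$, so it lies in $\poscone{H}$. Thus $g$ is a $G$-conjugate of an element of $\poscone{H}$, giving $g \in G \cdot \poscone{H}$.

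The step I expect to demand the most care is this last kernel-absorption, since it is exactly where both halves of the hypothesis on $\ker\Phi$ are needed: containment $\ker\Phi \subseteq H$ to keep the correction term inside $H$, and normality of $\ker\Phi$ in $G$ so that $\inv{g_0} k g_0$ remains in $\ker\Phi$. The auxiliary assertions in the statement are comparatively routine: $\Phi|_H$ induces an order on $H$ because any homomorphism into an ordered group does (\Cref{explicit-induced-order}, part~\ref{explicit-induced-order-1}), and $\ordl_H \in \orelS{H}{\ker\Phi}$ follows once one checks that $\restr{\ordl_Q}{\Phi(H)}$ stays non-trivial, full and archimedean and that $\ker\Phi$, being exactly the kernel of the inducing map $\Phi|_H$, is an antichain for $\ordl_H$.
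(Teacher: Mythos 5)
Your proposal is correct, and it in fact proves slightly more than the paper's own argument does. Both proofs run on the same engine: compare positive cones, lift a positive element of \(Q\) through the surjection \(\Phi\), and absorb the resulting \(\ker\Phi\)-ambiguity into \(H\) using \(\ker\Phi \normsub H\). They diverge at one step. The paper argues: if \(1 \ordl_G g\) then \(1 \ordl_Q \Phi(g)\), \emph{so} \(\Phi(g) \in \Phi(H)\), and concludes that \(\poscone G\) is literally contained in \(H\). That middle step presumes \(\poscone Q \subseteq \Phi(H)\); but by \Cref{explicit-induced-order} the hypothesis that \(\ordl_Q\) is induced by the inclusion \(\Phi(H) \into Q\) only yields \(\poscone Q = Q \cdot (\Phi(H) \cap \poscone Q)\), whose elements are \(Q\)-conjugates of elements of \(\Phi(H)\) and need not lie in \(\Phi(H)\). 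The presumption is automatic exactly when \(\Phi(H) \cap \poscone Q\) is conjugation-invariant — for instance when \(\Phi(H)\) is central in \(Q\), which is the only situation in which the paper invokes the lemma (\(\Phi(H) = Z(Q)\) in the proof of \Cref{bns-iff-fg}). Your proof keeps track of the conjugator instead: writing \(\Phi(g) = qp\inv q\), lifting \(q\) and \(p\), and pushing the kernel correction inside the conjugated copy of \(H\) via \(g = g_0(h_0 \cdot \inv{g_0}kg_0)\inv{g_0}\), you obtain the identity \(\poscone G = G \cdot \poscone H\), which is the correct general form demanded by \Cref{explicit-induced-order}, and uniqueness from \Cref{order-from-positive} finishes the argument. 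In short, the paper's route is shorter and adequate for its application; yours is the one that establishes the lemma in the generality in which it is stated, and the kernel-absorption inside the conjugate is precisely the detail that makes the difference.
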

\begin{proof}
	Recall \Cref{order-from-positive} stating that an order is characterised entirely by its positive cone.

	Let \(1 \ordl_G g \in G\).
	Then \(1 \ordl_Q \Phi(g) \in Q\).
	So \(\Phi(g) \in \Phi(H)\).
	Hence there is some \(h \in H\) such that \(\Phi(h) = \Phi(g)\).
	That is, \(g \inv h \in \ker\Phi \normsub H\), so \mbox{\(g = (g \inv h)h \in H\)}.
\end{proof}

\begin{Lemma}\label{bns-pass-to-subgroup}
	Let \(G\) be a group and \(Q\) a finitely generated nilpotent group.
	Let \mbox{\(\Phi\colon G \onto Q\)} be onto.

	Take \(Q\) to be ordered by \(\ordl_Q\) and \(G\) ordered by \(\indor {\ordl_Q} \Phi\).
	Let \(P\) be the subgroup of \(Q\) such that \(\ordl_Q\) is induced by a total order on \(Z(Q/P)\) as provided by \Cref{characterisation-nilpotent-orders}.
	The order on \(Q/P\) that induces \(\ordl_Q\) is called \(\ordl\).
	Set \(\Psi \coloneqq \pi \circ \Phi\) and \(H \coloneqq \inv\Psi(Z(Q/P))\).
	This situation is summed up in \Cref{bns-pass-to-subgroup-diagram}.
	\begin{figure}
		\centering
\begin{tikzpicture}[shorten >=1pt,on grid,auto]
	\node[] (H) {\(H\)};
	\node[] (G) [right=3 of H] {\(G\)};
	\node[] (Q) [below=1.5 of G] {\(Q\)};
	\node[] (QP) [below=1.5 of Q] {\(Q/P\)};
	\node[] (piZQP) [below=1.5 of H] {\(\inv\pi(Z(Q/P))\)};
	\node[] (ZQP) [below=1.5 of piZQP] {\(Z(Q/P)\)};
	\path[->>]
		(G) edge [right] node {\(\Phi\)} (Q)
		(Q) edge [right] node {\(\pi\)} (QP)
		(H) edge [right] node {\(\Phi\)} (piZQP)
		(piZQP) edge [right] node {\(\pi\)} (ZQP)
	;
	\path[right hook ->]
		(H) edge (G)
		(ZQP) edge (QP)
		(piZQP) edge (Q)
	;
\end{tikzpicture}
		\caption{If we start with an ordered nilpotent group \(Q\) and a map \(\Phi\), we get induced orders on all the groups in this diagram.}
		\label{bns-pass-to-subgroup-diagram}
	\end{figure}

	Then \[\indor\ordl\Psi \in \obns G \iff \indor{\restr\ordl{Z(Q/P)}}\Psi \in \obns H.\]
\end{Lemma}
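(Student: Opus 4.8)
The plan is to show that both sides of the equivalence reduce to the coarse connectedness of one and the same subset of $H$, exploiting that the two induced orders literally share a positive cone.

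First I would compute the positive cones. By \Cref{characterisation-nilpotent-orders} the order $\ordl$ on $Q/P$ is induced by a total order on $Z(Q/P)$; since central elements are conjugation–invariant, \Cref{explicit-induced-order} shows that this induced cone is just the positive central elements, $\posconex{(Q/P)}{\ordg} = \poscone{Z(Q/P)} \subseteq Z(Q/P)$. Pulling back through $\Psi$ and using \Cref{explicit-induced-order} again, the order $\indor\ordl\Psi$ on $G$ has positive cone $\inv\Psi(\poscone{Z(Q/P)})$, and so does $\indor{\restr\ordl{Z(Q/P)}}\Psi$ on $H$; note this set is contained in $\inv\Psi(Z(Q/P)) = H$. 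I would also record that both orders lie in the domain of $\obns{\cdot}$: by \Cref{inducing-maps-chain} the $G$-order equals $\indor{\ordl_Q}\Phi$, which is full and archimedean because $\ordl_Q$ is and $\Phi$ is onto; the $H$-order is induced by the surjection $\Psi|_H$ from the full archimedean total order on $Z(Q/P)$ (\Cref{total-archimedean-in-r}).

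Next I would identify the maximal antichain normal subgroups as $\ker\Psi$ on both sides. On $H$, $\Psi$ restricts to a surjection onto the totally ordered $Z(Q/P) \cong \Z^n$, so any antichain subgroup maps to a subgroup of $Z(Q/P)$ meeting its positive cone trivially, hence to $\{1\}$; thus $\ker\Psi$ is the unique maximal antichain subgroup of $H$ (this is the content of \Cref{ker-max-ac}, and the direct argument avoids assuming $H$ finitely generated). On $G$ the order comes from the non-total order $\ordl$, so instead I argue that for an antichain normal $K \normsub G$ the image $\Psi(K)$ is normal in $Q/P$ with $\Psi(K) \cap Z(Q/P) = \{1\}$; since a nontrivial normal subgroup of a nilpotent group meets its center, $\Psi(K) = \{1\}$, i.e.\ $K \subseteq \ker\Psi$. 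As $\ker\Psi$ is itself an antichain normal subgroup, it is the unique maximal one in $G$ as well.

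Since $\ker\Psi$ is the only maximal antichain normal subgroup in each case, the $\obns$-condition reduces on both sides to coarse connectedness of $(\ker\Psi)_{\ordge} = \ker\Psi \cup \ker\Psi\cdot\poscone{G}$. This set depends only on $\ker\Psi$ and the common positive cone, so it is one and the same subset $U \subseteq H \subseteq G$. Finally, because $G$ is finitely generated, \Cref{coarsely-conn-indep-of-surroundings} gives that $U$ is coarsely connected in $G$ if and only if it is coarsely connected in $H$, and chaining the three equivalences yields the claim. The main obstacle is the $G$-side of the middle step: as $\indor\ordl\Psi$ is induced from a \emph{partial} order, \Cref{ker-max-ac} is unavailable and pinning down $\ker\Psi$ as the unique maximal antichain normal subgroup genuinely uses nilpotency of $Q/P$. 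A secondary point of care is that $H$ may fail to be finitely generated, so I keep all arguments on $H$ free of finite-generation hypotheses and let \Cref{coarsely-conn-indep-of-surroundings} bridge to the finitely generated $G$.
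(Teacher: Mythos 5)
Your proof is correct and follows essentially the same route as the paper's: identify \(\ker\Psi\) as the unique maximal antichain normal subgroup on both sides (using that a nontrivial normal subgroup of the nilpotent \(Q/P\) meets its totally ordered center), observe that both \(\obns\cdot\)-conditions then ask for coarse connectedness of one and the same subset of \(H\), and conclude via \Cref{coarsely-conn-indep-of-surroundings}. If anything, your write-up is more careful than the paper's, which applies \Cref{ker-max-ac} directly to the possibly non-finitely-generated \(H\); just note that your closing appeal to finite generation of \(G\) is likewise not a stated hypothesis of the lemma, an imprecision shared by the paper's own one-line citation of \Cref{coarsely-conn-indep-of-surroundings}.
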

\begin{proof}
	By \Cref{ker-max-ac}, the only maximal antichain subgroup of \(H\) is \(\ker\Psi\).
	Also, any maximal antichain normal subgroup of \(G\) gets mapped by \(\Psi\) to a maximal antichain normal subgroup of \(Q/P\).
	Since \(Q/P\) is nilpotent, every nontrivial normal subgroup of \(Q/P\) has nontrivial intersection with \(Z(Q/P)\).
	Thus the only normal antichain subgroup of \(Q/P\) is the trivial group.
	Hence the only maximal normal antichain subgroup of \(G\) is \(\ker\Psi\).
	The claim follows from \Cref{coarsely-conn-indep-of-surroundings}.
\end{proof}

\begin{Theorem}\label{bns-iff-fg}
	Let \(G\) be a finitely generated group, \(Q\) finitely generated nilpotent and \(\Phi\colon G \onto Q\) onto.
	The following are equivalent.
	\begin{enumerate}
		\item\label{bns-iff-fg-fg} \(\ker\Phi\) is finitely generated
		\item\label{bns-iff-fg-bns} \(\orelS G {\ker\Phi} \subseteq \obns G\)
		\item\label{bns-iff-fg-bnsz} \(\orelS {\bar Z} {\ker\Phi} \subseteq \obns{\bar Z}\)
	\end{enumerate}
	where \(\bar Z \coloneqq \inv\Phi(Z(Q))\).
\end{Theorem}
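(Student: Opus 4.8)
The plan is to reduce the nilpotent statement to the classical abelian \Cref{original-bns-iff-fg-abelian}. The decisive point is that \(\bar Z/\ker\Phi\cong Z(Q)\) is abelian, so, via \Cref{maps-are-orders} and \Cref{abelian-orders-are-maps}, condition \ref{bns-iff-fg-bnsz} is exactly the order form of the Bieri--Neumann--Strebel criterion \(\relS{\bar Z}{\ker\Phi}\subseteq\bns{\bar Z}\) for the map \(\bar Z\onto Z(Q)\). I would prove \ref{bns-iff-fg-fg}\(\iff\)\ref{bns-iff-fg-bnsz} essentially by hand and \ref{bns-iff-fg-fg}\(\iff\)\ref{bns-iff-fg-bns} by induction on the well-ordering of finitely generated nilpotent groups used in the proof of \Cref{characterisation-nilpotent-orders} (under which every proper central quotient of \(Q\) comes before \(Q\)), throughout using \Cref{characterisation-nilpotent-orders} to present any order via a total order on the center of a quotient and the passage lemmas \Cref{rel-sphere-pass-to-subgroup} and \Cref{bns-pass-to-subgroup} to move between \(G\) and the subgroups they produce. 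The base class \(1\) is exactly \Cref{original-bns-iff-fg-abelian}, where \(\bar Z=G\) and \ref{bns-iff-fg-bns} and \ref{bns-iff-fg-bnsz} coincide.

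For \ref{bns-iff-fg-fg}\(\Rightarrow\)\ref{bns-iff-fg-bns} I argue directly. An order in \(\orelS G{\ker\Phi}\) is, by \Cref{characterisation-nilpotent-orders}, of the form \(\indor\ordl\Psi\) with \(\Psi=\pi\circ\Phi\colon G\onto Q/P\) and \(\ordl\) a total order on \(Z(Q/P)\). By \Cref{bns-pass-to-subgroup} its membership in \(\obns G\) is equivalent to that of the restricted order in \(\obns H\), where \(H=\inv\Psi(Z(Q/P))\) satisfies \(H/\ker\Psi\cong Z(Q/P)\). Since \(P\leq Q\) is finitely generated and \(\ker\Phi\) is finitely generated, \(\ker\Psi=\inv\Phi(P)\) and hence \(H\) are finitely generated, so \Cref{original-bns-iff-fg-abelian} applies to the abelian quotient \(H\onto Z(Q/P)\) and places the order in \(\obns H\), whence the original order lies in \(\obns G\). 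The same bookkeeping gives \ref{bns-iff-fg-fg}\(\Rightarrow\)\ref{bns-iff-fg-bnsz}: finite generation of \(\ker\Phi\) makes \(\bar Z\) finitely generated (an extension of \(Z(Q)\) by \(\ker\Phi\)), and \Cref{original-bns-iff-fg-abelian} for \(\bar Z\onto Z(Q)\) yields \ref{bns-iff-fg-bnsz}.

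For \ref{bns-iff-fg-bnsz}\(\Rightarrow\)\ref{bns-iff-fg-fg} I would first upgrade \(\bar Z\) to a finitely generated group. Choosing a character \(\iota\colon Z(Q)\to\R\) inducing a total order on \(Z(Q)\), the orders pulled back from \(\iota\) and from \(-\iota\) have, by \Cref{ker-max-ac}, the single maximal antichain subgroup \(\ker\Phi\), and their upper sets are the two halves \(\inv\chi([0,\infty))\) and \(\inv\chi((-\infty,0])\) of \(\bar Z\), where \(\chi=\iota\circ\Phi\). Condition \ref{bns-iff-fg-bnsz} makes each half coarsely connected \emph{inside} \(\bar Z\), hence each is contained in a finitely generated subgroup of \(\bar Z\); as their union is all of \(\bar Z\), these subgroups generate \(\bar Z\), which is therefore finitely generated. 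Now \Cref{original-bns-iff-fg-abelian} applied to \(\bar Z\onto Z(Q)\) turns \ref{bns-iff-fg-bnsz} into finite generation of \(\ker\Phi\).

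The remaining implication \ref{bns-iff-fg-bns}\(\Rightarrow\)\ref{bns-iff-fg-fg} is where I expect the real work. Restricting \ref{bns-iff-fg-bns} to the orders for which \(\bar Z\) is an antichain yields precisely condition \ref{bns-iff-fg-bns} for the map \(G\onto Q/Z(Q)\), so the inductive hypothesis gives that \(\bar Z\) is finitely generated; it then suffices to put the \emph{whole} character sphere of \(Z(Q)\) into \(\obns{\bar Z}\) and invoke \Cref{original-bns-iff-fg-abelian}. The orders from totally ordering characters are handled by the \(P=1\) case of \Cref{bns-pass-to-subgroup} as above, but a character \(\iota_0\) with nontrivial kernel \(L\) is only seen on \(G\) through an order with \(P\neq1\), which \Cref{bns-pass-to-subgroup} reduces to an order on the strictly larger subgroup \(H_P\supseteq\bar Z\) rather than on \(\bar Z\). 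The crux is thus a descent along the normal inclusion \(\bar Z\into H_P\) with nilpotent quotient. I would carry it out by passing to \(Q/L\), where \(\iota_0\) becomes injective on a direct summand of \(Z(Q/L)\); extending by zero gives an order on \(\bar Z_L=\inv\Phi(\inv{\pi_L}(Z(Q/L)))\) whose positive cone, by the computation in the proof of \Cref{rel-sphere-pass-to-subgroup}, already lies in \(\bar Z\). The inductive hypothesis for \(Q/L\) puts this order in \(\obns{\bar Z_L}\), and since its positive part lives in \(\bar Z\) one can transport coarse connectedness back to \(\bar Z\), placing the \(\iota_0\)-order in \(\obns{\bar Z}\). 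Making this transport precise — controlling the rational directions of the sphere and the direct-summand choice — is the step I expect to demand the most care.
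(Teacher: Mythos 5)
Your implications \ref{bns-iff-fg-fg}\(\Rightarrow\)\ref{bns-iff-fg-bns}, \ref{bns-iff-fg-fg}\(\Rightarrow\)\ref{bns-iff-fg-bnsz} and \ref{bns-iff-fg-bnsz}\(\Rightarrow\)\ref{bns-iff-fg-fg} are sound and use the same toolkit as the paper (\Cref{characterisation-nilpotent-orders}, \Cref{bns-pass-to-subgroup}, \Cref{original-bns-iff-fg-abelian}); indeed \ref{bns-iff-fg-fg}\(\Rightarrow\)\ref{bns-iff-fg-bns} is essentially the paper's argument, and your device of covering \(\bar Z\) by the two halves \(\inv\chi([0,\infty))\) and \(\inv\chi((-\infty,0])\) to get finite generation of \(\bar Z\) is a clean variant of the paper's step (the paper instead takes a finitely generated \(H\) with \(\ker\Phi \subseteq H \subseteq \bar Z\) and applies \Cref{original-bns-iff-fg-abelian} inside \(H\)). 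The structural difference is that the paper never proves \ref{bns-iff-fg-bns}\(\Rightarrow\)\ref{bns-iff-fg-fg} directly: it closes the cycle \ref{bns-iff-fg-fg}\(\Rightarrow\)\ref{bns-iff-fg-bns}\(\Rightarrow\)\ref{bns-iff-fg-bnsz}\(\Rightarrow\)\ref{bns-iff-fg-fg}, so all that is needed beyond what you already have is \ref{bns-iff-fg-bns}\(\Rightarrow\)\ref{bns-iff-fg-bnsz}. For that, given \(\ordl \in \orelS{\bar Z}{\ker\Phi}\) coming from a character \(\iota_0\) on \(Z(Q)\) with possibly nontrivial kernel \(L\), the paper does \emph{not} present the corresponding order on \(G\) through \Cref{characterisation-nilpotent-orders} (which, as you note, would force \(P = L \neq 1\) and land you on the larger subgroup \(H_P\)); it instead takes the order on \(Q\) induced by the inclusion \(Z(Q) \into Q\), whose positive cone is \(\{z \in Z(Q) \mid \iota_0(z) > 0\}\) itself, pulls it back to an order \(\ordl'\) on \(G\), and uses \Cref{rel-sphere-pass-to-subgroup} to see that the positive cone of \(\ordl'\) lies inside \(\bar Z\), so that membership in \(\obns G\) (from hypothesis \ref{bns-iff-fg-bns}) descends to \(\obns{\bar Z}\) by \Cref{bns-pass-to-subgroup}. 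No descent from a subgroup strictly larger than \(\bar Z\) ever occurs.

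The genuine gap in your proposal is exactly the step you flag, and two specific claims in your sketch fail or are unjustified. First, the ``extension by zero'' order on \(\bar Z_L = \inv\Phi(\inv{\pi_L}(Z(Q/L)))\) does \emph{not} have positive cone inside \(\bar Z\): its cone is the preimage of \(\{x \in Z(Q/L) \mid \tilde\iota(x) > 0\}\), and an element \(x = zw\) with \(z \in Z(Q)/L\), \(\bar\iota_0(z) > 0\) and \(w \neq 1\) in the complementary summand is positive but need not lift to a central element of \(Q\), because \(Z(Q/L)\) is in general strictly larger than \(Z(Q)/L\) (take \(Q\) the Heisenberg group times \(\Z\) and \(L\) the commutator direction); so the computation of \Cref{rel-sphere-pass-to-subgroup} does not confine the cone to \(\bar Z\). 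Second, even granting membership in \(\obns{\bar Z_L}\), transporting coarse connectedness from \(\bar Z_L\) down to \(\bar Z\) is not a formal consequence of any lemma in the paper: \Cref{bns-pass-to-subgroup} moves from a group to the subgroup that \emph{carries the positive cone}, which is precisely why it applies to the paper's \(\ordl'\) but not to your order on \(\bar Z_L\). So \ref{bns-iff-fg-bns}\(\Rightarrow\)\ref{bns-iff-fg-fg} remains unproved as written; the repair is not to descend at all but to lift each order of \(\orelS{\bar Z}{\ker\Phi}\) to an order on \(G\) supported in \(\bar Z\), i.e.\ to prove \ref{bns-iff-fg-bns}\(\Rightarrow\)\ref{bns-iff-fg-bnsz} the paper's way. (One caveat if you adopt that route: for \Cref{rel-sphere-all-pi} to place the lifted order \(\ordl'\) in \(\orelS G{\ker\Phi}\) one must check it is full, a point the paper treats very lightly and which is the same subtlety you correctly identified as the crux.)
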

\begin{proof}
\(\ref{bns-iff-fg-fg} \Rightarrow \ref{bns-iff-fg-bns}\):
	Let \(\ordl \in \orelS G {\ker\Phi}\).
	By definition, \(\ordl\) is induced by a unique order on \(Q\).
	\Cref{characterisation-nilpotent-orders} tells us that there is a normal subgroup \(P \normsub Q\) such that the order on \(Q\) is induced by a total order on \(Z(Q/P)\).

	Let \(\pi\) be the projection map \(Q \onto Q/P\) and set \[H \coloneqq \inv{(\pi \circ \Phi)}(Z(Q/P)) \subseteq G.\]
	We know that \[g \ordg 1 \iff \Phi(g)P \ordg 1 \in Z(Q/P),\] so \(\restr\ordl H \in \obns H\) implies \(\ordl \in \obns G\) by \Cref{bns-pass-to-subgroup}.
	Hence it is enough to show that \(\restr \ordl H \in \obns H\).

	\(\restr{(\pi \circ \Phi)}H \colon H \onto Z(Q/P)\) is a surjective map onto an abelian group.
	So by \Cref{original-bns-iff-fg-abelian}, if its kernel is finitely generated, then \(\restr\ordl H \in \obns H\).
	By construction, \(\ker(\pi \circ \Phi) \normsub H\), so \(\ker(\pi \circ \Phi) = \ker\restr{(\pi \circ \Phi)}H\).

	Note that we may write \(\ker(\pi \circ \Phi)\) as an extension \[
		\ker\Phi \into \ker(\pi \circ \Phi) \onto \ker\pi.
	\] Using different names for the same groups, we obtain the extension \[
		\ker\Phi \into \ker\restr{(\pi \circ \Phi)}H \onto P.
	\]
	By assumption, \(\ker\Phi\) was finitely generated.
	As \(P\) is a subgroup of a finitely generated nilpotent group and hence itself finitely generated, \(\ker\restr{(\pi \circ \Phi)}H\) is also finitely generated.
	Hence \(\restr\ordl H \in \obns H\) and \(\ordl \in \obns G\).

\(\ref{bns-iff-fg-bns} \Rightarrow \ref{bns-iff-fg-bnsz}\):
	Let \(\ordl \in \orelS{\bar Z} {\ker\Phi}\).
	Then \(\ordl\) is induced by an order on \(\bar Z/\ker\Phi \cong Z(Q)\).
	This order induces an order on \(Q\) and that order induces an order \(\ordl'\) on \(G\).

	\(\ordl'\) is hence induced by the inclusion \(\bar Z \into G\) by \Cref{rel-sphere-pass-to-subgroup} and \[\restr{\ordl'}{\bar Z} = \ordl.\]
	Thus if \(\ordl' \in \obns G\), then \(\ordl \in \obns{\bar Z}\) by \Cref{bns-pass-to-subgroup}.
	As \(\ker\Phi\) is an antichain with respect to \(\ordl\)
	and \(\ker\Phi \normsub \bar Z\), the kernel is also an antichain with respect to \(\ordl'\).
	Hence \[
		\ordl' \in \orelS G{\ker\Phi} \subseteq \obns G
	\] and therefore \(
		\ordl \in \obns{\bar Z}.
	\)

\(\ref{bns-iff-fg-bnsz} \Rightarrow \ref{bns-iff-fg-fg}\):
	Let \(H\) be a finitely generated subgroup of \(\bar Z\) containing \(\ker\Phi\).
	If no such \(H\) exists, then \((\ker\Phi)_{\ordge}\) is not coarsely connected for any order on \(\bar Z\).
	That is no order in \(\orelS {\bar Z}{\ker\Phi}\) is contained in \(\obns {\bar Z}\).
	As the former contains one element for every order on \(\bar Z / \ker\Phi = Z(Q)\) and \(Z(Q)\) is a nontrivial torsion-free abelian group, the relative order sphere is in particular nonempty.
	Hence if \ref{bns-iff-fg-bnsz} is true, \(H\) must exist.

	We have \(\orelS H {\ker\Phi} \subseteq \orelS {\bar Z} {\ker\Phi} \subseteq \obns {\bar Z}\).
	By \Cref{bns-pass-to-subgroup}, this also means \(\orelS H {\ker\Phi} \subseteq \obns H\).
	As \(H/\ker\Phi\) is a subgroup of \(Z(Q)\) and hence abelian, \Cref{original-bns-iff-fg-abelian} shows that \(\ker\Phi\) is finitely generated, finishing the proof.

\end{proof}

\begin{bibdiv}\begin{biblist}
	\bib{BNS}{article} {
		author={Bieri, Robert},
		author={Neumann, Walter D.},
		author={Strebel, Ralph},
		title={A geometric invariant of discrete groups},
		journal={Invent Math},
		volume={90},
		year={1987},
		pages={451--477},
		doi={10.1007/BF01389175}
	}
	\bib{CMZ}{book} {
		author={Clement, Anthony E.},
		author={Majewicz, Stephen},
		author={Zyman, Marcos},
		title={The Theory of Nilpotent Groups},
		year={2017},
		publisher={Birkhäuser Cham}
	}
	\bib{Fisher}{arXiv} {
		title={Improved algebraic fibrings}, 
		author={Fisher, Sam P.},
		year={2022},
		eprint={2112.00397},
	}
	\bib{Glass}{book} {
		author={Glass, A. M. W.},
		title={Partially Ordered Groups},
		year={1999}
	}
	\bib{Holder}{article} {
		author = {H{\"o}lder, Otto},
		journal = {Ber. Verh. S{\"a}chs. Akad. Wiss. Leipzig Math. Phys. Kl.},
		pages = {1--64},
		title = {Die Axiome der Quantit{\"a}t und die Lehre vom Ma{\ss}},
		volume = {53},
		year = {1901}
	}
	\bib{Kielak}{article} {
		author={Kielak, Dawid},
		title={Residually finite rationally solvable groups and virtual fibring},
		journal={J. Amer. Math. Soc.},
		volume={33},
		year={2020},
		pages={451--486},
		doi={10.1090/jams/936}
	}
	\bib{Kopytov}{book} {
		author={Kopytov, Valeriǐ M.},
		author={Medvedev, Nikolaǐ {Ya}.},
		title={Right-ordered groups},
		year={1996}
	}
	\bib{MNS}{arXiv} {
		title={The Sigma Invariants for the Golden Mean Thompson Group}, 
		author={Molyneux, Lewis},
		author={Nucinkis, Brita},
		author={Santos Rego, Yuri},
		year={2023},
		eprint={2309.12213},
	}
	\bib{Renz}{article} {
		author={Renz, Burkhardt},
		title={Geometrische Invarianten und Endlichkeitseigenschaften von Gruppen},
		year={1988}
	}
	\bib{Strebel-notes}{arXiv} {
		title={Notes on the Sigma invariants}, 
		author={Strebel, Ralph},
		year={2013},
		eprint={1204.0214},
	}
\end{biblist}\end{bibdiv}

\end{document}